\newtheorem{thm}{Theorem}[section]
\newtheorem{cor}[thm]{Corollary}
\newtheorem{lem}[thm]{Lemma}
\newtheorem{prop}[thm]{Proposition}
\theoremstyle{definition}
\theoremstyle{remark}
\numberwithin{equation}{section}
\newcommand{\mf}{\mathbf}
\newcommand{\ra}{\rightarrow}
\newcommand{\z}{\zeta}
\newcommand{\pa}{\partial}
\newcommand{\ov}{\overline}
\newcommand{\ep}{\epsilon}
\newcommand{\no}{\noindent}
\newcommand{\la}{\lambda}
\newcommand{\La}{\Lambda}
\newcommand{\al}{\alpha}
\newcommand{\be}{\beta}
\newcommand{\ga}{\gamma}
\newcommand{\de}{\delta}
\newcommand{\De}{\Delta}
\title{Remarks on the metric induced by the Robin function III}
\subjclass{Primary: 32F45 ; Secondary : 31C10, 31B25}
\author{Diganta Borah}
\address{Indian Institute of Science Education and Research, Pune-411008,
India}
\email{dborah@iiserpune.ac.in}
\thanks{The author was supported in part by the grant IFA-13 MA-21 from DST
under INSPIRE Faculty Award}
\begin{document}
\maketitle
\begin{abstract}
Let $D$ be a smoothly bounded pseudoconvex domain in $\mathbf C^n$, $n > 1$.
Using the Robin function $\La(p)$ that arises from the Green function $G(z,
p)$ for $D$ with pole at $p \in D$ associated with
the standard sum-of-squares Laplacian, N. Levenberg and H. Yamaguchi had
constructed a K\"{a}hler metric (the so-called
$\La$-metric) on $D$. In this article, we study the existence of geodesic
spirals for this metric.
\end{abstract}

\section{Introduction}

\noindent We continue the study of the metric induced by the Robin function on
strongly pseudoconvex domains in $\mf C^n$ from \cite{B} and \cite{BV}. To
quickly recall the setup, for 
a smoothly bounded pseudoconvex domain $D \subset \mf C^n$, the $\La$-metric on
D is defined as
\[
ds^2 = \sum_{\al, \beta = 1}^{n} \frac{\pa^2 \log(-\La)}{\pa z_{\al} \pa \ov
z_{\beta}} dz_{\al} \otimes d \ov z_{\beta}
\]
where $\La(p) = \lim_{z \ra p}(G(z,p) - \vert z-p \vert^{-2n+2})$ is the Robin
function associated to the $\mf R^{2n}$-Green function $G(z,p)$ with pole at $p
\in D$. It was proved in 
\cite{LY} that $\log(-\La)$ is strictly plurisubharmonic and hence $ds^2$
defines a K\"{a}hler metric, which is however not invariant under
biholomorphisms in general. Despite this 
seeming drawback, the $\La$-metric on a strongly pseudoconvex domain $D \subset
\mf C^n$ shares several properties with the Bergman metric (which is an
invariant K\"{a}hler metric!). 
For example, it was shown in \cite{BV} that the $\La$-metric on a strongly
pseudoconvex domain $D$ has the same boundary asymptotics as those of the
Bergman metric (and hence the 
Kobayashi and also the Carath\'{e}odory metric) which implies that it is
complete and that the metric space $(D, ds^2)$ is Gromov hyperbolic. Also, the
results of \cite{B} show that 
the holomorphic sectional curvature of $ds^2$ along normal directions approaches
$-1/(n-1)$ at the boundary, which is much like what is known for the Bergman
metric. To carry this 
similarity further, it was shown in \cite{Her} that on a nonsimply connected
strongly pseudoconvex domain $D$, every nontrivial homotopy class of closed
loops in $\pi_1(D)$ contains a 
closed geodesic in the Bergman metric. It is also known that (see \cite{Don},
\cite{DonFef}, \cite{Ohs89}) for a smooth strongly pseudoconvex domain $D$, the
space of harmonic 
forms $\mathcal H^{p, q}(D)$ with respect to the Bergman metric is zero
dimensional if $p + q \not= n$ while it is infinite dimensional for $p + q =
n$. 
Using the fact that the boundary asymptotics of the Bergman metrics match those
of the $\La$-metric, the exact analogues of both results were shown to hold for
the
$\La$-metric as well in \cite{B}. 

\medskip

The purpose of this note is to identify one more property that is shared by the
$\La$-metric and the Bergman metric thus increasing the list of their
similarities by one. We first need 
a definition. Let $(M, g)$ be a complete Riemannian manifold. A {\it geodesic
spiral} is a geodesic $c : \mathbb R \rightarrow M$ such that there is a compact
subset $K \subset M$ with 
$c(t) 
\in K$ for all $t \ge 0$ and $c$ is not closed. Further, if $c : \mf{R}
\rightarrow M$ is a non-constant geodesic and there exist times $t_1, t_2 \in
\mf{R}$ with $t_1 < t_2$ such 
that $c(t_1) = c(t_2)$, then the curve $c(t)$ restricted to the interval $[t_1,
t_2]$ will be called a {\it geodesic loop} through the point $c(t_1) = c(t_2)
\in M$. 

\begin{thm}
Let $D$ be a smoothly bounded strongly pseudoconvex domain in $\mf{C}^n$ and
suppose that
the universal cover of $D$ is infinitely sheeted. 
Then for each $p_0 \in D$ which does not lie on a closed geodesic there exists a
geodesic spiral for the $\La$-metric passing through $p_0$.
\end{thm}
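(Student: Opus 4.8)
The plan is to reduce the theorem to constructing a single geodesic $c:\mf R\ra D$ with $c(0)=p_0$ whose forward ray $c|_{[0,\infty)}$ is bounded. Since the $\La$-metric is complete, Hopf--Rinow then traps $c([0,\infty))$ in a compact set, so $c$ is either a geodesic spiral through $p_0$ or a closed (periodic) geodesic through $p_0$; the latter is impossible because $p_0$ does not lie on a closed geodesic. So the entire problem is to produce a bounded forward geodesic ray issuing from $p_0$.

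The construction proceeds in the universal cover. Since that cover is infinitely sheeted, $\pi_1(D)$ is infinite, so $D$ admits a nontrivial free homotopy class, and by the analogue of Herbort's theorem for the $\La$-metric proved in \cite{B} this class contains a closed geodesic $\ga:\mf R\ra D$ of some period $\ell>0$ with compact image $\Ga$; let $g\in\pi_1(D)$ be the corresponding (infinite-order) deck element. As $p_0$ is not on a closed geodesic, $r_0:=\mathrm{dist}(p_0,\Ga)>0$. Pass to $\pi:\ti D\ra D$ with the pulled-back metric --- complete and proper, with $\pi_1(D)$ acting by isometries, freely and properly discontinuously. Fix a lift $\ti p_0$ of $p_0$, a lift $\ti\Ga$ of $\Ga$ (a complete geodesic translated by $\ell$ under $g$), and a foot $\ti q_0\in\ti\Ga$ of the perpendicular from $\ti p_0$. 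For each $k\ge1$ let $\ti\sig_k$ be a minimizing geodesic in $\ti D$ from $\ti p_0$ to $g^k\ti p_0$, of length $L_k=\mathrm{dist}(\ti p_0,g^k\ti p_0)$; since $L_k\ge\mathrm{dist}(\ti q_0,g^k\ti q_0)-2r_0$ and the latter tends to $\infty$ as $k\ra\infty$ (by proper discontinuity, the orbit of $g$ being infinite), $L_k\ra\infty$. The projections $\sig_k:=\pi\circ\ti\sig_k:[0,L_k]\ra D$ are then geodesic loops at $p_0$ of lengths $\ra\infty$.

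The crucial step --- and the one I expect to be the main obstacle --- is to show that all the $\sig_k$ lie in a single fixed compact subset of $D$. The idea is that the ``staircase'' path in $\ti D$ from $\ti p_0$ perpendicularly to $\ti q_0$, then along $\ti\Ga$ to $g^k\ti q_0$, then perpendicularly to $g^k\ti p_0$, is a quasi-geodesic with constants independent of $k$ (its two end-legs have the fixed length $r_0$ and meet $\ti\Ga$ at right angles), so that a hyperbolic-type stability/fellow-traveling estimate forces $\ti\sig_k$ to stay within a uniform distance $C$ of this staircase, hence within distance $C$ of $\ti\Ga\cup[\ti p_0,\ti q_0]\cup g^k[\ti p_0,\ti q_0]$; projecting, $\sig_k$ then lies in the fixed compact set $K:=\{x\in D:\mathrm{dist}(x,\Ga\cup\sig)\le C\}$, where $\sig:=\pi([\ti p_0,\ti q_0])$. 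The delicacy is that the Gromov hyperbolicity established in \cite{BV} is for $(D,ds^2)$, not for $\ti D$ (and likewise one needs $g$ of infinite order), so one has to transfer these facts to the cover --- using that the hyperbolic behaviour of the $\La$-metric is concentrated near $\pa D$ and recurs identically in every sheet --- or argue directly from the strong negativity of the $\La$-metric near $\pa D$; for instance, if one can verify that the lifted metric is nonpositively curved on $\ti D$, then $t\mapsto\mathrm{dist}(\ti\sig_k(t),\ti\Ga)$ is convex and hence $\le r_0$ throughout, which gives the claim outright.

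Granting that the $\sig_k$ stay in $K$, the theorem follows quickly. The unit initial velocities $v_k:=\sig_k'(0)\in T_{p_0}D$ subconverge to a unit vector $v$; by continuous dependence of geodesics on their initial data and by completeness, the complete geodesics $c_k$ extending the $\sig_k$ converge, uniformly on compact time-intervals, to the (non-constant) geodesic $c$ with $c(0)=p_0$, $c'(0)=v$. For each fixed $t\ge0$ we have $t\le L_k$ for all large $k$, so $c_k(t)=\sig_k(t)\in K$ and therefore $c(t)\in K$; thus $c|_{[0,\infty)}\subset K$. By the reduction in the first paragraph, $c$ is a geodesic spiral through $p_0$, completing the plan.
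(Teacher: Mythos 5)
Your overall architecture is sound, and in fact it is essentially a re-derivation of Herbort's Lemma 2.2, which the paper simply cites: once one knows that a family of geodesic loops through $p_0$ of lengths tending to infinity lies in a single compact set $K\subset D$, the limit of their initial directions gives a complete geodesic trapped in $K$ for $t\ge 0$, and it cannot be closed since $p_0$ lies on no closed geodesic. The genuine gap is exactly the step you flag as the main obstacle: you never establish that the loops $\sigma_k$ stay in a fixed compact set, and neither of the routes you sketch is available. The $\La$-metric is not known to be nonpositively curved --- the curvature information in the literature is only asymptotic (holomorphic sectional curvatures along normal directions tend to $-1/(n-1)$ at $\pa D$) and says nothing about the interior --- so the convexity of $t\mapsto\mathrm{dist}(\ti\sigma_k(t),\ti\Gamma)$ is unjustified. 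The Gromov hyperbolicity proved in \cite{BV} is a property of $(D,ds^2)$, not of its universal cover with the lifted metric; hyperbolicity does not pass to infinite covers (the flat torus covered by $\mathbf{R}^2$ is the standard caution), and even granting it you would still need the lifted closed geodesic $\ti\Gamma$ to be a uniformly quasi-geodesic line and the staircase constants to be independent of $k$, none of which you verify. A smaller point: you need your deck element $g$ to have infinite order for $\mathrm{dist}(\ti q_0,g^k\ti q_0)\to\infty$; an infinite $\pi_1(D)$ does not formally hand you one, and Herbort's lemma sidesteps this by working with infinitely many distinct homotopy classes rather than powers of a single element.

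What actually closes the gap in the paper is an analytic boundary barrier, not coarse geometry. Proposition \ref{key} shows there is an $\ep=\ep(D)>0$ such that any geodesic $\ga$ with $\psi(\ga(0))>-\ep$ and $(\psi\circ\ga)'(0)=0$ satisfies $(\psi\circ\ga)''(0)>0$; hence $\psi\circ\ga$ can have no interior maximum inside the $\ep$-collar of $\pa D$, so \emph{every} geodesic loop through $p_0$ is confined to a sublevel set $\{\psi\le-\ep_1\}$, which is the compact $K$ required by Herbort's lemma. The proposition itself is proved by contradiction using the refined boundary asymptotics of $\La$, of $g_{\al\ov\be}$, of the inverse metric, and of the third-order derivatives of $\La$ (hence of $\pa g_{\al\ov\be}/\pa p_\ga$) developed in Section 2; plugging these into the geodesic equation forces $(\psi\circ\ga)''(0)/\vert\ga'(0)\vert^2\to 2\psi_{1\ov 1}(0)>0$ by strong pseudoconvexity. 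The missing ingredient in your proposal is precisely this quantitative statement that geodesics of the $\La$-metric cannot turn around inside a fixed collar of the boundary; without it, or a proved substitute, your compactness claim --- and hence the proof --- does not go through.
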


The analogous result for the Bergman metric on smoothly bounded strongly
pseudoconvex domains can be found in \cite{Her}. The main step is Lemma 2.2 of
\cite{Her} which states that if 
$(M, g)$ is a complete Riemannian manifold whose universal cover is infinitely
sheeted and $x_0 \in M$ is a point through which no closed geodesic passes and
$K \subset M$ is a compact 
set which contains all possible geodesic loops through $x_0$, then there is a
geodesic spiral passing through $x_0$. By appealing to this, the theorem 
follows if we can show that there exists a compact set $K \subset D$ that
contains all the possible geodesic spirals through $p_0$. Thus the problem
reduces 
to finding such a compact $K$. To do this, let $\psi$ be a globally smooth
defining function for the strongly pseudoconvex domain $D$.

\begin{prop}\label{key}
There exists an $\epsilon = \epsilon(D) > 0$ such that for each geodesic 
$\gamma : \mf{R} \rightarrow D$ for the $\La$-metric with $\psi(\gamma(0)) >
-\epsilon$ and $(\psi \circ \gamma)^{\prime}(0) = 0$, it follows that
$(\psi \circ \gamma)^{\prime\prime}(0) > 0$.
\end{prop}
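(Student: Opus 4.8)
The plan is to convert the statement, via the geodesic equation, into a lower bound for the Riemannian Hessian of $\psi$ close to $\pa D$, the needed positivity being extracted from the known boundary behaviour of the Robin function. After an affine reparametrisation we may assume $\ga$ has unit speed for the $\La$-metric $g$ — this changes $(\psi\circ\ga)''(0)$ only by a positive factor — and we discard the trivial case of a constant $\ga$. Since $\ga$ is a $g$-geodesic, $\nab_{\dot\ga}\dot\ga=0$, hence $(\psi\circ\ga)''(0)=\mathrm{Hess}_g\psi\,(v,v)$ with $v:=\dot\ga(0)$, $|v|_g=1$, and the hypothesis $(\psi\circ\ga)'(0)=0$ says exactly that $v\in\ker d\psi_{\ga(0)}$. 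Writing $V$ for the $(1,0)$-component of $v$ and expanding in holomorphic coordinates,
\[
(\psi\circ\ga)''(0)=2\,\mathrm{Re}\bigl(\psi_{ij}V^iV^j-\psi_k\,\Ga^k_{ij}V^iV^j\bigr)+2\,\psi_{i\ov j}V^i\ov V^j ,
\]
where $\Ga^k_{ij}=g^{\ov l k}\pa_i g_{j\ov l}$. Because $v\in\ker d\psi$ the number $\psi_iV^i$ is purely imaginary; write $\psi_iV^i=\sqrt{-1}\,s$ with $s\in\mf R$.

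Now bring in the boundary asymptotics. Choosing $\psi$ (as we may) strictly plurisubharmonic on a neighbourhood of $\ov D$, the analysis of the Robin function in \cite{LY}, \cite{BV} and \cite{B} yields, near $\pa D$, an expansion of the K\"ahler potential $\phi:=\log(-\La)$ of the form $\phi=c\,\bigl(-\log(-\psi)\bigr)+\eta$, where $c=c(n)>0$ and $\eta$ is bounded together with its derivatives of order up to three; equivalently $g=g_0+\pa\ov\pa\eta$, where the model metric is $(g_0)_{i\ov j}=c\bigl(\tfrac{\psi_i\psi_{\ov j}}{\psi^2}+\tfrac{\psi_{i\ov j}}{-\psi}\bigr)$ and $\pa\ov\pa\eta$ has bounded entries and bounded first derivatives. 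Two consequences: first, $g\ge(\mathrm{const}/|\psi|)\,(\text{Euclidean})$ near $\pa D$, so $|V|^2\le C|\psi|$; second, the normalisation $|v|_g=1$ reads $2c\bigl(\tfrac{s^2}{\psi^2}+\tfrac{\psi_{i\ov j}V^i\ov V^j}{-\psi}\bigr)=1+O(|\psi|)$, which rearranges to
\[
2\,\psi_{i\ov j}V^i\ov V^j+\frac{2s^2}{-\psi}=\frac{|\psi|}{c}+O(|\psi|^2).
\]

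The heart of the proof is the computation of the Christoffel term. For the model metric one computes, using the invertibility of $(\psi_{i\ov j})$, that
\[
\psi_k\,\Ga^k_{ij}(g_0)=-\frac{2\psi_i\psi_j}{\psi}+\Theta_{ij},\qquad \Theta_{ij}=\psi_{ij}+O(|\psi|)\ \text{near }\pa D ,
\]
so that, using $\psi_iV^i=\sqrt{-1}\,s$, the term $-2\,\mathrm{Re}\bigl(\psi_k\Ga^k_{ij}V^iV^j\bigr)$ contributes $\tfrac{4s^2}{-\psi}-2\,\mathrm{Re}(\psi_{ij}V^iV^j)+O(|\psi|^2)$: the first summand is non-negative, and the second cancels the sign-indefinite term $2\,\mathrm{Re}(\psi_{ij}V^iV^j)$. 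It remains to check that the perturbation $g-g_0=\pa\ov\pa\eta$ — whose first derivatives feed into the Christoffel symbols — alters $\psi_k\Ga^k_{ij}V^iV^j$ only by $O(|\psi|^2)$; here one uses that the $O(|\psi|^{-3})$-part of $\pa g$ is, up to $\psi$-derivatives, a multiple of $\psi_{\ov l}$, against which $g^{-1}$ contracts only to size $\psi^2$, together with $|V|^2\le C|\psi|$. Combining everything,
\[
(\psi\circ\ga)''(0)=2\,\psi_{i\ov j}V^i\ov V^j+\frac{4s^2}{-\psi}+O(|\psi|^2)
=\Bigl(2\,\psi_{i\ov j}V^i\ov V^j+\frac{2s^2}{-\psi}\Bigr)+\frac{2s^2}{-\psi}+O(|\psi|^2)
\ \ge\ \frac{|\psi|}{c}+O(|\psi|^2),
\]
by the rearranged normalisation. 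Hence there is $\ep=\ep(D)>0$ with $(\psi\circ\ga)''(0)>0$ whenever $\psi(\ga(0))>-\ep$.

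The main obstacle is precisely this Christoffel computation and its error term: one must push the model computation far enough to see the two cancellations (the appearance of the non-negative $4s^2/(-\psi)$ and the disappearance of $2\,\mathrm{Re}(\psi_{ij}V^iV^j)$), and then control $\pa\ov\pa\eta$ uniformly near $\pa D$, both of which rest on the boundary expansion of $\La$ and its derivatives from \cite{LY}, \cite{BV}, \cite{B}. A cleaner way to organise the perturbation is to observe that at a critical point of $\psi\circ\ga$ the sign of $(\psi\circ\ga)''(0)$ agrees with that of $(\ti\phi\circ\ga)''(0)$ for $\ti\phi:=-\log(-\psi)$ (since $\ti\phi$ is strictly increasing in $\psi$), and then to use the identity $(\mathrm{Hess}_g\phi)_{i\ov j}=g_{i\ov j}$, valid for any K\"ahler potential $\phi$ of $g$: this reduces matters to the sign of $1+2\,\mathrm{Re}\bigl((\mathrm{Hess}_g\phi)_{ij}V^iV^j\bigr)-\mathrm{Hess}_g\eta(v,v)$, where $\mathrm{Hess}_g\eta(v,v)=O(|\psi|)$ and the leading part of $2\,\mathrm{Re}\bigl((\mathrm{Hess}_g\phi)_{ij}V^iV^j\bigr)$ is the non-negative quantity $2cs^2/\psi^2$.
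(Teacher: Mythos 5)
Your overall skeleton---reduce to $\mathrm{Hess}_g\psi(v,v)$ via the geodesic equation, observe that the $(2,0)$ Christoffel term both cancels $2\,\Re(\psi_{ij}V^iV^j)$ and produces the non-negative contribution $4s^2/(-\psi)$, and conclude positivity from strict plurisubharmonicity of $\psi$---is exactly the cancellation pattern that drives the paper's computation (there $-2\Re I$ cancels $2\Re II$ and the surviving term is $2\psi_{1\ov 1}(0)>0$). But there is a genuine gap at the step you yourself identify as the heart of the proof. Writing $\log(-\La)=(2n-2)\bigl(-\log(-\psi)\bigr)+\eta$ forces $\eta=\log(-\la)$ with $\la$ the normalised Robin function, and you assume $\eta$ is bounded together with its derivatives up to order \emph{three}. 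Boundedness through order two is known ($\la$ is $C^2$ up to $\pa D$), but third-order boundary regularity of $\la$ is precisely what is \emph{not} known---the paper states this explicitly (``It is not known whether $\la$ is $C^3$-smooth up to $\pa D$\dots''). Since the Christoffel symbols are first derivatives of the metric, i.e.\ third derivatives of the potential, your error estimate for $\psi_k\Ga^k_{ij}V^iV^j$ (and likewise the bound $\mathrm{Hess}_g\eta(v,v)=O(|\psi|)$ in your alternative organisation) rests on exactly this missing regularity. The entire second half of Section 2 of the paper exists to circumvent it: the third derivatives of $\La$ are re-expressed through the auxiliary functions $g_{\al}(p,w)$ coming from the variation formulas for the Green function, and their weighted boundary limits are computed by explicit integrals over the half-space boundary.

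A second, related obstruction: even granting control of third derivatives, the available asymptotics (Theorem \ref{asymp-La}, Proposition \ref{asymp-3rd-der-La}) are \emph{sequential} limits along normally approaching sequences under the normalisation $(\dagger)$, not a uniform expansion with quantified error near $\pa D$. Your direct argument needs uniformity in all the $O(|\psi|)$ and $O(|\psi|^2)$ terms, uniformly over the boundary point and the direction $v$. This is why the paper runs the argument by contradiction: it extracts a sequence of geodesics violating the conclusion, normalises the domain at each stage, and passes to the limit. To repair your proof you would either have to establish a uniform third-order expansion of $\log(-\La)$ near $\pa D$ (which the paper treats as open) or recast your computation in the paper's sequential, proof-by-contradiction form.
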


Take this $\ep > 0$ and let $2 \ep_1 = \min\{ \ep, \psi(p_0) \}$. Then
\[
K = \{ p \in D : \psi(p) \le -\ep_1\}
\]
is the compact set that we are seeking. Indeed, let $\gamma : [t_1, t_2]
\rightarrow D$ be a geodesic loop with $p_0 = \gamma(t_1) = \gamma(t_2)$.
Suppose that $\gamma$ does not lie in 
$K$, i.e., $\gamma$ enters the $\ep_1$ band around the boundary $\partial D$.
But
then, being a loop, it must turn back and hence $\psi \circ \gamma$ must have a
maximum somewhere, sat 
at $t_0 \in (t_1, t_2)$. This implies that $(\psi \circ \gamma)(t_0) >
-\epsilon, (\psi \circ \gamma)^{\prime}(t_0) = 0$ and $(\psi \circ
\gamma)^{\prime\prime}(t_0) < 0$
which contradicts the proposition. 
Thus it suffices to prove Proposition \ref{key}.

\medskip

\no {\it Acknowledgements :} The author would like to thank K.~Verma for the
suggestion of this problem and his encouragement and precious comments during
the
course of this work.

\section{Asymptotics of $\La$ and $\la$}
\noindent  We begin by strengthening some of the boundary asymptotics of the
$\La$-metric from \cite{BV}. 
Let $D$ be a $C^{\infty}$-smoothly bounded domain in $\mf{C}^n$ with
a $C^{\infty}$-smooth defining function $\psi$. In what follows, the standard
convention of denoting derivatives by suitable subscripts will be followed. For
example, $\psi_{\al}=\pa \psi/\pa p_{\al}$, $\psi_{\al\ov\be}=\pa^2
\psi/ \pa p_{\al\ov\be}$, etc. Also, let $\pa \psi=(\psi_1, \ldots, \psi_{n})$.
The \textit{normalised} Robin
function $\la$ associated to $(D, \psi)$ is defined by
\[
\la(p)=\begin{cases}
\La(p)\psi(p)^{2n-2} & \text{if $p \in D$},\\
-\vert \pa\psi(p)\vert^{2n-2} & \text{if $p \in \pa D$}.
\end{cases}
\]
This function has the following geometric significance: For $p \in D$, let
$D(p)$ be the domain in $\mf{C}^n$ obtained by applying the affine
transformation $z \mapsto (z-p)/(-\psi(p))$ to $D$, i.e.,
\[
D(p)= \left\{w \in \mf{C}^n: w= \frac{z-p}{-\psi(p)}\right\}.
\]
Observe that $D(p)$ contains the origin and by \cite{Y}*{Prop. 5.1},
\[
\La_{D(p)}(0) = \La(p) \psi(p)^{2n-2} = \la(p).
\]
Also, for $p \in \pa D$, let $D(p)$ be the half-space defined by
\[
D(p) = \Big\{w \in \mf{C}^n : 2 \Re \big(\sum_{\al=1}^n \psi_{\al}(p)
w_{\al}\big)-1<0\Big\}.
\]
Again, $D(p)$ contains the origin and by \cite{BV}*{(1.4)},
\[
\La_{D(p)}(0) = -\vert \pa \psi(p) \vert^{2n-2} = \la(p).
\]
Thus $\la(p)$ is the Robin constant for $D(p)$ at the origin. In \cite{LY}, this
geometric significance of $\la$ was used to understand its regularity near
the boundary $\pa D$. Indeed, let
\[
\mathcal{D}= \cup_{p \in D \cup \pa D} \big(p, D(p)\big) =\{(p,w): p \in D, w \in D(p)\}.
\]
The $\mathcal{D} : p \mapsto D(p)$ is a smooth variation of domains in
$\mf{C}^n$ defined by the smooth function on $\mf{C}^n \times \mf{C}^n$,
\begin{equation}\label{defn-f}
f(p,w)=2 \Re \left\{\sum_{\al=1}^n \int_{0}^1 (w_{\al}\psi_{\al}(p-\psi(p) t
w))dt\right\}-1.
\end{equation}
Suppose $g(p,w)$ is the Green function for $D(p)$ with pole at the origin. Then
we have the first variation formula
\begin{equation}\label{1st-v-f}
\frac{\pa g}{\pa p_{\al}}(p,w)=\frac{1}{2(n-1)\sigma_{2n}}\int_{\pa D(p)}
{k_1^{(\al)}(p, \z)} \vert \pa_{\z}g(p,\z) \vert \frac{\pa g_{w}}{\pa
n_{\z}}(p,\z) \, dS_{\z}, \quad p \in D, w \in D(p).
\end{equation}
Here, $\sigma_{2n}$ is the surface area of the unit sphere in $\mf{R}^{2n}$,
$dS_{\z}$ is the surface area element on $\pa D(p)$, $\pa_{\z}g=(\pa g/\pa
\z_1, \ldots, \pa g/\pa \z_n)$, $g_w(p,w)$ is the Green function for $D(p)$
with pole at $w$, $n_{\z}$ is the unit outward normal to $\pa D(p)$ at $\z$, and
\begin{equation}\label{k-1}
k_1^{(\al)}(p,\z)=\frac{\pa f}{\pa p_a}(p,\z)\Big/{\vert \pa _{\z} f(p,\z)
\vert}.
\end{equation}
When $p \in D$ converges to $p_0 \in \pa D$ and $w \in D(p)$ converges to $w_0
\in D(p_0)$, then the integral (\ref{1st-v-f}) converges to
\begin{equation}\label{1st-v-f-bdy}
\frac{1}{2(n-1)\sigma_{2n}}\int_{\pa D(p_0)} {k_1^{(\al)}(p_0, \z)} \vert
\pa_{\z}g(p_0,\z) \vert \frac{\pa g_{w_0}}{\pa n_{\z}}(p_0, \z) \, dS_{\z}.
\end{equation}
Then using a standard argument (see Step 6, Chapter 3 of \cite{LY}) it was shown that $\pa g/\pa p_{\al}(p_0, w_0)$ exists and is equal to the above integral. It follows that 
$g(p,w)$ is a $C^1$-smooth function of $p$ up to $\pa D$ and (\ref{1st-v-f}) holds for $p \in \pa D$ also. From \cite{Y}*{(1.3)}, $\la$ is also a $C^1$-smooth function of $p$ up to $\pa D$. Also, since
\[
\la_{\al}(p)=\frac{\pa g}{\pa p_{\al}}(p, 0), \quad p \in D, 
\]
we note that for all $p \in \ov D$,
\begin{equation}\label{1st-v-la}
\la_{\al}(p)=-\frac{1}{(n-1)\sigma_{2n}}\int_{\pa D(p)} {k_1^{(a)}(p, \z)} \vert \pa_{\z}g(p,\z) \vert^2 \, dS_{\z}.
\end{equation}
Similarly, using
the second variation
formula it was shown that $g(p,w)$ and thus $\la(p)$ is a $C^2$-smooth function
of $p$ up to $\pa D$.

In \cite{B}, we studied the boundary behaviour of $\La$ and $\la$ under a
$C^{\infty}$-perturbation of $D$. In this section, we derive some consequences
of these results that will be used to prove the main theorem. First, let
$D_{\nu}$, $\nu \geq 1$, be $C^{\infty}$-smoothly bounded domains in
$\mf{C}^n$ with $C^{\infty}$-smooth defining functions $\psi_{\nu}$, such that
$\{\psi_{\nu}\}$ converges in the $C^{\infty}$-topology on compact subsets of
$\mf{C}^n$
to $\psi$. The normalised Robin function associated to $(D_{\nu}, \psi_{\nu})$
will be denoted by $\la_{\nu}$. For multi-indices $A=(\al_1, \al_2, \ldots,
\al_n)$, $B=(\be_1, \be_2, \ldots, \be_n) \in \mf{N}$, let
\[
D^{A}=\frac{\pa^{\vert A \vert}}{\pa p_1^{\al_1}\pa p_2^{\al_2} \cdots \pa
p_n^{\al_n}} \quad \text{and} \quad D^{\ov B}=\frac{\pa^{\vert B \vert}}{\pa \ov
p_1^{\be_1}\pa \ov p_2^{\be_2}\cdots \pa \ov p_n^{\be_n}}
\]
and let $D^{A\ov B}=D^AD^B$. We have from \cite{B}:

\begin{thm}\label{asymp-La}
Suppose $p_{\nu} \in D_{\nu}$  
converges to $p_0 \in \pa D$. Define the half space
\[
\mathcal{H} = \Big\{ w \in \mathbf{C}^{n} : 2 \Re \Big( \sum_{\al=1}^{n}
\psi_{\al}(p_0) w_{\al} \Big) -1 < 0 \Big\}
\]
and let $\Lambda_{\mathcal{H}}$ denote the Robin function for $\mathcal{H}$.
Then for all multi-indices $A,B\in\mf{N}$,
\[
(-1)^{\vert A \vert + \vert B \vert} D^{A \ov B} \Lambda_{\nu} (p_{\nu}) \big(
\psi_{\nu}(p_{\nu})\big)^{2n-2+ \vert A \vert + \vert B \vert} \rightarrow D^{A
\ov B} \Lambda_{\mathcal{H}}(0)
\]
as $\nu \ra \infty$.
\end{thm}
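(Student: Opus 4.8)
The plan is to peel off the somewhat opaque combination on the left-hand side by means of an exact affine-scaling identity, reducing the theorem to a clean domain-convergence statement: that the base-point derivatives of the Robin functions of the blown-up domains $D_\nu(p_\nu)$ at the origin converge to those of the half-space $\mathcal{H}$. The scaling identity is an algebraic consequence of the dilation-covariance of the $\mf{R}^{2n}$-Green function, while the convergence is the genuinely analytic part, to be handled through the regular part of the Green function and the perturbation estimates of \cite{B}.

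First I would record the scaling identity. Write $c_\nu = -\psi_\nu(p_\nu) > 0$, so that $D_\nu(p_\nu)$ is the image of $D_\nu$ under the dilation $z \mapsto (z - p_\nu)/c_\nu$. Because the $\mf{R}^{2n}$-Green function transforms covariantly under dilations, for a variable pole $p' = p_\nu + c_\nu u$ (with $p_\nu$, hence $c_\nu$, held fixed) one has
\[
\La_{D_\nu}(p') = c_\nu^{-(2n-2)} \, \La_{D_\nu(p_\nu)}(u), \qquad u = \frac{p' - p_\nu}{c_\nu}.
\]
Since $c_\nu$ is real, the chain rule gives $\pa/\pa p'_\al = c_\nu^{-1}\,\pa/\pa u_\al$ and $\pa/\pa \ov{p}'_\al = c_\nu^{-1}\,\pa/\pa \ov{u}_\al$; applying $D^{A\ov B}$ and evaluating at $p' = p_\nu$, i.e. $u = 0$, yields
\[
D^{A\ov B}\La_{D_\nu}(p_\nu) = c_\nu^{-(2n-2)-|A|-|B|} \, D^{A\ov B}\La_{D_\nu(p_\nu)}(0).
\]
As $2n-2$ is even we have $(-1)^{|A|+|B|}\psi_\nu(p_\nu)^{2n-2+|A|+|B|} = c_\nu^{\,2n-2+|A|+|B|}$; multiplying the previous display by this quantity cancels all powers of $c_\nu$ and produces
\[
(-1)^{|A|+|B|} D^{A\ov B}\La_\nu(p_\nu)\, \psi_\nu(p_\nu)^{2n-2+|A|+|B|} = D^{A\ov B}\La_{D_\nu(p_\nu)}(0).
\]
This is exactly the left-hand side of the theorem, so the problem reduces to proving $D^{A\ov B}\La_{D_\nu(p_\nu)}(0) \ra D^{A\ov B}\La_{\mathcal{H}}(0)$.

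Next I would verify that $D_\nu(p_\nu) \ra \mathcal{H}$ in $C^{\infty}_{loc}$. The defining function $f_\nu(p_\nu, w)$ of (\ref{defn-f}) has integrand $w_\al\,\psi_{\nu,\al}(p_\nu - \psi_\nu(p_\nu)\, t\, w)$, and as $\psi_\nu(p_\nu) \ra 0$, $p_\nu \ra p_0$ and $\psi_{\nu,\al} \ra \psi_\al$ in $C^\infty$ on compact sets, it converges with all $w$-derivatives to $w_\al\,\psi_\al(p_0)$, so that $f_\nu(p_\nu, w) \ra 2\Re(\sum_{\al=1}^{n} \psi_\al(p_0) w_\al) - 1$, the defining function of $\mathcal{H}$. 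To pass from the domains to the Robin functions I would work with the regular part $H_\Omega(w, w_0)$ of the Green function, defined by $G_\Omega(w, w_0) = |w - w_0|^{-(2n-2)} - H_\Omega(w, w_0)$; it is harmonic in each variable, smooth in $(w, w_0)$ including on the diagonal $w = w_0$, and satisfies $\La_\Omega(w_0) = -H_\Omega(w_0, w_0)$, whence $D^{A\ov B}\La_\Omega(0) = -D^{A\ov B}_{w_0} H_\Omega(w_0, w_0)\big|_{w_0 = 0}$. For the half-space the method of images gives explicitly $H_{\mathcal{H}}(w, w_0) = |w - w_0^{*}|^{-(2n-2)}$, with $w_0^{*}$ the reflection of $w_0$ across $\pa\mathcal{H}$, which is smooth on the diagonal. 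Using the $C^\infty$-stability of the Green function underlying the perturbation analysis of \cite{B}, the regular parts converge jointly, $H_{D_\nu(p_\nu)} \ra H_{\mathcal{H}}$ in $C^{\infty}_{loc}$; restricting to the diagonal and differentiating in $w_0$ then gives $D^{A\ov B}\La_{D_\nu(p_\nu)}(0) \ra D^{A\ov B}\La_{\mathcal{H}}(0)$ for every $A, B$.

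The main obstacle is precisely this joint $C^{\infty}_{loc}$ convergence of the regular parts, because the limit domain $\mathcal{H}$ is unbounded: as $\nu \ra \infty$ the boundaries $\pa D_\nu(p_\nu)$ flatten onto the hyperplane $\pa\mathcal{H}$ and escape to infinity, so the local smooth convergence of the defining functions does not by itself control $H_{D_\nu(p_\nu)}$. What is needed is a uniform-in-$\nu$ estimate, with decay, on $H_{D_\nu(p_\nu)}(w, w_0)$ and its derivatives — reflecting the $|\z|^{-(2n-1)}$ decay of the half-space Poisson kernel — so that the tails of the Poisson representation of $H_{D_\nu(p_\nu)}$ may be dominated and the limit taken under the integral sign. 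Such uniform decay follows from the uniform geometry of the large dilated strongly pseudoconvex domains $D_\nu(p_\nu)$ together with the quantitative perturbation estimates of \cite{B}; granting it, dominated convergence closes the argument for all multi-indices $A$ and $B$.
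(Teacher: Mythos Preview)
The paper does not prove this theorem here; it is quoted verbatim from \cite{B} (see the sentence ``We have from \cite{B}:'' immediately preceding the statement), so there is no proof in the present paper to compare against.

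That said, your sketch is the correct skeleton of the argument in \cite{B}. The scaling identity you derive is exactly the mechanism behind the normalised Robin function: the paper records the zeroth-order case $\La_{D(p)}(0)=\La(p)\psi(p)^{2n-2}=\la(p)$ just before the theorem, and your chain-rule computation is the clean higher-order version of this. Where your approach differs slightly in flavour from \cite{B} is in the analytic step. You propose to work with the regular part $H_\Om(w,w_0)$ of the Green function, establish its joint $C^\infty_{loc}$ convergence $H_{D_\nu(p_\nu)}\to H_{\mathcal H}$, and then restrict to the diagonal. The treatment in \cite{B} (and in \cite{LY}) instead proceeds through the variation formulae for the Green function $g(p,w)$ with respect to the parameter $p$ (boundary integrals such as (\ref{1st-v-f})), proving convergence of these integrals as $p_\nu\to p_0$. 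Your route is conceptually more direct; the variation-formula route has the advantage that the boundary integrals make the required decay explicit and are tailored to exactly the derivative orders one needs.

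The one place your sketch is genuinely incomplete is the paragraph you yourself flag: the uniform decay of $H_{D_\nu(p_\nu)}$ and its derivatives needed to pass to the unbounded limit $\mathcal H$. You are right that this is the crux, and right that the estimates in \cite{B} supply it, but ``granting it'' is doing all the work. If you want a self-contained argument you would need to carry out the dominated-convergence step in detail, which is precisely what the boundary-integral machinery of \cite{LY} and \cite{B} is designed for.
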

For the half space $\mathcal{H}$, we have the explicit formula
\begin{equation}\label{G-H}
G_{\mathcal{H}}(p,z)=\vert z- p \vert^{-2n+2} - \vert z - p^*\vert^{-2n+2},
\end{equation}
where $p^*$ is the symmetric point of $p$ given by
\begin{equation}\label{sym-pt}
p^{*}=p-\left(\frac{2 \Re \big(\sum_{\al=1}^n \psi_{\al}(p_0)
p_{\al}\big)-1}{\vert
\pa \psi(p_0) \vert^2}\right) \ov\pa \psi(p_0).
\end{equation}
Therefore, the Robin function for $\mathcal{H}$ is
\[
\La_{\mathcal{H}}(p)= -\vert p - p^{*} \vert^{-2n+2} = - \vert \pa \psi(p_0)
\vert^{2n-2}\Big\{2
\Re\big(\sum_{\al=1}^n \psi_{\al}(p_0) p_{\al}\big)-1\Big\}^{-2n+2}.
\]
Thus we can compute $D^{A\ov B} \La_{\mathcal{H}}(0)$ explicitly for all
multi-indices $A,B$, and hence the above theorem provides the boundary
asymptotics of all derivatives of $\La_{\nu}$. For our record, we now write
down few of them in the corollary that follows. Let
\[
\mf{I}=\{1, \ldots, n\} \quad \text{and} \quad \ov{\mf{I}}=\{\ov 1, \ldots, \ov
n\}.
\]
If $a \in \ov{\mf{I}}$, then let $p_a=\ov p_{\ov a}$.
\begin{cor}\label{cor-asymp-La}
Under the hypothesis of Theorem \ref{asymp-La}, we have for all $a,b,c \in
\mf{I} \cup \ov{\mf{I}}$,
\begin{itemize}
\item [(a)] $\displaystyle \lim_{\nu \to \infty}
\La_{\nu}(p_{\nu})\big(\psi_{\nu}(p_{\nu})\big)^{2n-2}=-\big\vert \pa \psi(p_0)
\big\vert^{2n-2}$,
\item [(b)] $\displaystyle \lim_{\nu \to \infty}\frac{\pa \La_{\nu}}{\pa
p_{a}}(p_{\nu})\big(\psi_{\nu}(p_{\nu})\big)^{2n-1}=(2n-2)\psi_a(p_0)\big\vert
\pa
\psi(p_0)
\big\vert^{2n-2}$,
\item [(c)] $\displaystyle \lim_{\nu \to \infty} \frac{\pa^2 \La_{\nu}}{\pa
p_{a}\pa
p_{b}}(p_{\nu})\big(\psi_{\nu}(p_{\nu})\big)^{2n}
=-(2n-2)(2n-1)\psi_a(p_0)\psi_b(p_0)\big\vert \pa
\psi(p_0)
\big\vert^{2n-2}$, and
\item [(d)] $\displaystyle \lim_{\nu \to \infty} \frac{\pa^3 \La_{\nu}}{\pa
p_{a}\pa p_{b}\pa
p_{c}}(p_{\nu})\big(\psi_{\nu}(p_{\nu})\big)^{2n+1}=(2n-2)(2n-1)2n
\psi_a(p_0)\psi_b(p_0)\psi_c(p_0)\big\vert \pa \psi(p_0)
\big\vert^{2n-2}$.
\end{itemize}
\end{cor}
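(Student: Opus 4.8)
The plan is to read off all four limits directly from Theorem \ref{asymp-La}, by evaluating the right-hand quantities $D^{A\ov B}\La_{\mathcal H}(0)$ explicitly for multi-indices with $\vert A\vert+\vert B\vert$ equal to $0,1,2,3$. Writing $c=\vert\pa\psi(p_0)\vert^{2n-2}$ and introducing the real-affine function
\[
L(p)=2\Re\Big(\sum_{\al=1}^{n}\psi_\al(p_0)\,p_\al\Big)-1,
\]
the formula recorded just before the corollary reads $\La_{\mathcal H}(p)=-c\,L(p)^{-2n+2}$. The whole computation is governed by two elementary properties of $L$: first, $L(0)=-1$, since the sum vanishes at the origin; second, $\pa L/\pa p_a=\psi_a(p_0)$ for every $a\in\mf I\cup\ov{\mf I}$, where $\pa/\pa p_a$ is the holomorphic derivative when $a\in\mf I$ and the antiholomorphic one when $a\in\ov{\mf I}$, and $\psi_{\ov k}(p_0)=\ov{\psi_k(p_0)}$ for the barred indices because $\psi$ is real-valued. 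In particular $L$ is annihilated by every derivative of order at least two, so differentiating the composite $L^{-2n+2}$ produces no cross terms.

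First I would differentiate $\La_{\mathcal H}=-c\,L^{-2n+2}$ repeatedly: each $\pa/\pa p_a$ lowers the exponent by one, multiplies by the current exponent, and appends a factor $\psi_a(p_0)$. Evaluating at the origin with $L(0)=-1$ then yields
\[
\La_{\mathcal H}(0)=-c,\qquad \frac{\pa\La_{\mathcal H}}{\pa p_a}(0)=-(2n-2)\,c\,\psi_a(p_0),
\]
\[
\frac{\pa^2\La_{\mathcal H}}{\pa p_a\,\pa p_b}(0)=-(2n-2)(2n-1)\,c\,\psi_a(p_0)\,\psi_b(p_0),
\]
\[
\frac{\pa^3\La_{\mathcal H}}{\pa p_a\,\pa p_b\,\pa p_c}(0)=-(2n-2)(2n-1)(2n)\,c\,\psi_a(p_0)\,\psi_b(p_0)\,\psi_c(p_0).
\]
The uniform leading minus sign appears because, for a $k$-th order derivative, the sign $(-1)^k$ of the descending-factorial coefficient is exactly cancelled by the surviving power $L(0)^{-2n+2-k}=(-1)^k$, leaving only the overall sign of $-c$.

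Finally I would substitute these values into Theorem \ref{asymp-La}. As the prefactor there is $(-1)^{\vert A\vert+\vert B\vert}$, the limit in question equals $(-1)^{\vert A\vert+\vert B\vert}\,D^{A\ov B}\La_{\mathcal H}(0)$, which is negative for parts (a) and (c) and positive for parts (b) and (d); substituting back $c=\vert\pa\psi(p_0)\vert^{2n-2}$ reproduces the four displayed formulas verbatim. There is no analytic obstacle here beyond Theorem \ref{asymp-La} itself---the argument is pure bookkeeping---so the only point that genuinely demands care is the sign tracking, together with the consistent use of the convention $\psi_{\ov k}(p_0)=\ov{\psi_k(p_0)}$, so that in parts (c) and (d), where each of $a,b,c$ may independently be barred or unbarred, every factor $\psi_a(p_0)$, $\psi_b(p_0)$, $\psi_c(p_0)$ carries the correct holomorphic or antiholomorphic value of $\pa\psi$.
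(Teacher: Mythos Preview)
Your proof is correct and is exactly the approach the paper has in mind: the paper states that one can compute $D^{A\ov B}\La_{\mathcal H}(0)$ explicitly from the formula for $\La_{\mathcal H}$ and then read off the asymptotics from Theorem~\ref{asymp-La}, and that is precisely what you do, with the sign bookkeeping carried out correctly.
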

Now let $g_{\al \ov \be}$ and ${g_{\nu}}_{\al\ov \be}$, $1 \leq \al, \be \leq
n$, be the components of the $\La$-metric on $D$ and $D_{\nu}$ respectively.
Note that
\begin{equation}\label{g-al-be}
g_{\al\ov \be}=\frac{\pa^2  \log(-\La)}{\pa p_{\al} \pa \ov p_{\be}} =
\frac{\La_{\al \ov \be}}{\La}-\frac{\La_{\al}\La_{\ov \be}}{\La^2},
\end{equation}
and by differentiating this with respect to $p_{\ga}$, $1 \leq \ga \leq n$,
\begin{equation}\label{d-g-al-be}
\frac {\pa g_{\al \ov{\be}}} {\pa p_{\ga}} = \frac {\La_{\al \ov{\be} \ga}}
{\La} - \left(\frac {\La_{\al \ov{\be}}
\La_{\ga}} {\La^{2}} + \frac{\La_{\al \ga} \La_{\ov{\be}}} {\La^{2}} +
\frac{\La_{\ov{\be} \ga} \La_{\al}} {\La^{2}}
\right) + \frac{2 \La_{\al} \La_{\ov{\be}} \La_{\ga}} {\La^{3}}.
\end{equation}
Multiplying the corresponding equations for ${g_{\nu}}_{ \al \be}$ by
$\psi_{\nu}^2$ and $\psi_{\nu}^3$ respectively we obtain from Corollary
\ref{cor-asymp-La} that
\begin{cor}\label{asymp-g}
Under the hypothesis of Theorem \ref{asymp-La}, we have for all $\al,
\be, \ga \in \mf{I}$,
\begin{itemize}
\item [(a)] $\displaystyle \lim_{\nu \to \infty}{g_{\nu}}_{\al
\ov\be}(p_{\nu})\big(\psi_{\nu}(p_{\nu})\big)^2=(2n-2)\psi_{\al}(p_0)\psi_{\ov{
\be } } (p_0)$ , and
\item [(b)] $\displaystyle \lim_{\nu \to \infty} \frac{\pa {g_{\nu}}_{\al \ov
\be}}{\pa
p_{\gamma}}(p_{\nu})\big(\psi_{\nu}(p_{\nu})\big)^3=-2(2n-2)\psi_{\al}(p_0)\psi_
{ \ov { \be } }
(p_0)\psi_{\ga}(p_0)$.
\end{itemize}
\end{cor}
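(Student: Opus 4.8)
The statement follows by substituting the normalised asymptotics of Corollary \ref{cor-asymp-La} into the structural identities (\ref{g-al-be}) and (\ref{d-g-al-be}), which express ${g_{\nu}}_{\al\ov\be}$ and $\pa {g_{\nu}}_{\al\ov\be}/\pa p_{\ga}$ as rational combinations of $\La_{\nu}$ and its derivatives; these identities hold verbatim for each $D_{\nu}$, being merely the definition of the metric. Throughout, a leading subscript $\nu$ marks the member of the family and the remaining subscripts denote derivatives, so that $\La_{\nu\,\al}=\pa\La_{\nu}/\pa p_{\al}$, $\La_{\nu\,\al\ov\be}=\pa^2\La_{\nu}/\pa p_{\al}\pa\ov p_{\be}$, and so on; every function is understood to be evaluated at $p_{\nu}$ and all limits are as $\nu\ra\infty$. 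Writing $L:=|\pa\psi(p_0)|^{2n-2}$, Corollary \ref{cor-asymp-La} furnishes, for all indices $a,b,c\in\mf I\cup\ov{\mf I}$,
\[
\La_{\nu}\psi_{\nu}^{2n-2}\ra -L,\qquad \La_{\nu\,a}\psi_{\nu}^{2n-1}\ra(2n-2)\psi_{a}L,\qquad \La_{\nu\,ab}\psi_{\nu}^{2n}\ra-(2n-2)(2n-1)\psi_{a}\psi_{b}L,
\]
\[
\La_{\nu\,abc}\psi_{\nu}^{2n+1}\ra(2n-2)(2n-1)\,2n\,\psi_{a}\psi_{b}\psi_{c}L,
\]
where every $\psi$ on the right is evaluated at $p_0$. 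The plan is to multiply each identity by the indicated power of $\psi_{\nu}$, redistribute that power among the $\La$-factors so that each factor carries exactly one of the weights $2n-2,2n-1,2n,2n+1$ above, and then pass to the limit termwise.

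For (a), I would multiply (\ref{g-al-be}) by $\psi_{\nu}^{2}$ and regroup as
\[
{g_{\nu}}_{\al\ov\be}\,\psi_{\nu}^{2}=\frac{\La_{\nu\,\al\ov\be}\,\psi_{\nu}^{2n}}{\La_{\nu}\,\psi_{\nu}^{2n-2}}-\frac{\big(\La_{\nu\,\al}\,\psi_{\nu}^{2n-1}\big)\big(\La_{\nu\,\ov\be}\,\psi_{\nu}^{2n-1}\big)}{\big(\La_{\nu}\,\psi_{\nu}^{2n-2}\big)^{2}}.
\]
Now every grouped factor is of one of the normalised types, and the signs and the powers of $L$ cancel in the limit: the first quotient tends to $(2n-2)(2n-1)\psi_{\al}\psi_{\ov\be}$ and the second to $(2n-2)^{2}\psi_{\al}\psi_{\ov\be}$. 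Their difference equals $(2n-2)\big[(2n-1)-(2n-2)\big]\psi_{\al}(p_0)\psi_{\ov\be}(p_0)=(2n-2)\psi_{\al}(p_0)\psi_{\ov\be}(p_0)$, which is (a).

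Part (b) is the same mechanism applied to the five monomials of (\ref{d-g-al-be}) after multiplication by $\psi_{\nu}^{3}$, the weights being distributed so that each surviving power of $\psi_{\nu}$ is exactly $3$. In the limit the third-derivative term $\La_{\nu\,\al\ov\be\ga}/\La_{\nu}$ contributes $-(2n-2)(2n-1)\,2n\,\psi_{\al}\psi_{\ov\be}\psi_{\ga}$; each of the three mixed terms $\La_{\nu\,\al\ov\be}\La_{\nu\,\ga}/\La_{\nu}^{2}$, $\La_{\nu\,\al\ga}\La_{\nu\,\ov\be}/\La_{\nu}^{2}$, $\La_{\nu\,\ov\be\ga}\La_{\nu\,\al}/\La_{\nu}^{2}$ contributes $-(2n-2)^{2}(2n-1)\psi_{\al}\psi_{\ov\be}\psi_{\ga}$, so the bracketed block of (\ref{d-g-al-be}) contributes $+3(2n-2)^{2}(2n-1)\psi_{\al}\psi_{\ov\be}\psi_{\ga}$ after its sign; and the cubic term $2\La_{\nu\,\al}\La_{\nu\,\ov\be}\La_{\nu\,\ga}/\La_{\nu}^{3}$ contributes $-2(2n-2)^{3}\psi_{\al}\psi_{\ov\be}\psi_{\ga}$ (all $\psi$'s at $p_0$). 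Factoring out $(2n-2)\psi_{\al}(p_0)\psi_{\ov\be}(p_0)\psi_{\ga}(p_0)$ leaves the scalar
\[
-(2n-1)\,2n+3(2n-2)(2n-1)-2(2n-2)^{2}=-2,
\]
as one checks by expanding in $m:=2n-2$. Hence the limit is $-2(2n-2)\psi_{\al}(p_0)\psi_{\ov\be}(p_0)\psi_{\ga}(p_0)$, giving (b).

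The only point demanding attention is the power-bookkeeping in the regrouping step, namely verifying that in each of the five terms of (\ref{d-g-al-be}) the factor $\psi_{\nu}^{3}$ splits so that every $\La$-derivative acquires precisely its Corollary \ref{cor-asymp-La} weight, together with the concluding arithmetic cancellation that produces the clean constants $(2n-2)$ and $-2$. There is no analytic obstacle beyond invoking the already-established termwise limits.
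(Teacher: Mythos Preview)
Your proposal is correct and follows exactly the approach indicated in the paper: multiply (\ref{g-al-be}) and (\ref{d-g-al-be}) by $\psi_{\nu}^{2}$ and $\psi_{\nu}^{3}$, regroup so that each $\La$-factor carries its Corollary~\ref{cor-asymp-La} weight, and pass to the limit. The paper states this in a single sentence without spelling out the bookkeeping or the arithmetic cancellation, which you have carried through correctly.
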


\noindent In the proof of the main theorem, we will be particularly interested
in these asymptotics  when $D$ and $D_{\nu}$, $\nu \geq 1$, are in the
following form:
\begin{align*}
(\dagger)
\begin{cases}
\text{$\bullet$ $D$ is strongly pseudoconvex, $0 \in \pa D$, and $\pa
\psi(0)=(0, \ldots, 0, 1)$},\\
\text{$\bullet$ $D_{\nu}$ is strongly pseudoconvex, $0 \in \pa D_{\nu}$, and
$\pa \psi_{\nu}(0)=(0, \ldots, 0,1)$, and}\\
\text{$\bullet$ $p_{\nu}$ lies on the inner normal to $\pa D_{\nu}$ and $p_{\nu}
\to 0$.}
\end{cases}
\end{align*}
Under this normalisation, observe in corollary \ref{cor-asymp-La} that if any of
the
derivatives is with respect to a variable other than $p_n$ or $\ov p_n$, then
the asymptotics become $0$. This means that these are not the
strongest asymptotics in this case. Similarly, the asymptotics in corollary
\ref{asymp-g} are
not the strongest one unless $\al=\be=\ga=n$. The problem with these weak
asymptotics  is that they make $\det(g_{\al \ov \be})$ indeterminate. Indeed,
\[
g_{\al\ov\be}(p)\sim \frac{\psi_{\al}(0)
\psi_{\ov{\be}}(0)}{\big(\psi(p)\big)^2},
\]
for $p$ close to $0$ and hence
\[
\det\begin{pmatrix}g_{\al \ov \be}(p)\end{pmatrix} \sim
\frac{\det\begin{pmatrix}\psi_{\al}(0)\psi_{\ov\be}(0)\end{pmatrix}}{
\big(\psi(p)\big)^ { 2n
}},
\]
which is apriori indeterminate since the numerator vanishes and $\psi(p) \to 0$
as $p \to 0$.
Thus it is necessary to improve these asymptotics for the calculation of
geodesics. This can be done for the first and the second order derivatives of
$\La_{\nu}$ and for ${g_{\nu}}_{\al\ov \be}$, by means of the following theorem
from \cite{B}:
\begin{thm}\label{asymp-la}
Suppose $p_{\nu} \in D_{\nu}$ converges to $p_0 \in \pa D$. Then for all $\al,
\be \in \mf{I}$,
\begin{itemize}
\item [(a)]  $\displaystyle\lim_{\nu \to \infty}\la_{\nu}(p_{\nu}) = \la (p_0)$,
\item [(b)] $\displaystyle\lim_{\nu \to \infty}\frac{\pa \la_{\nu}}{\pa
p_{\al}}(p_{\nu}) = \la_{\al}(p_0)$,
\item [(c)] $\displaystyle\lim_{\nu \to \infty}\frac{\pa^2 \la_{\nu}}{\pa
p_{\al}\pa \ov p_{\be}} (p_{\nu}) = \la_{\al\ov{\be}}(p_0)$.
\end{itemize}
\end{thm}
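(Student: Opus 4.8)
\medskip
\noindent The theorem is quoted from \cite{B}; a proof can be obtained as follows from the variation machinery recalled above. Write $g_\nu(p,w)=G_{D_\nu(p)}(w,0)$ for the Green function of the model domain $D_\nu(p)=(D_\nu-p)/(-\psi_\nu(p))$ with pole at the origin, and $h_\nu(p,w)=g_\nu(p,w)-|w|^{-2n+2}$ for its finite, harmonic-in-$w$ regular part, so that $\la_\nu(p)=h_\nu(p,0)$. Differentiating in $p$ under the limit $w\ra 0$ --- legitimate since $h_\nu$ is $C^2$ in $p$ up to $\pa D_\nu$ and harmonic in $w$ --- gives $\pa_{p_\al}\la_\nu(p)=(\pa g_\nu/\pa p_\al)(p,0)$ and $\pa_{p_\al}\pa_{\ov p_\be}\la_\nu(p)=(\pa^2 g_\nu/\pa p_\al\pa\ov p_\be)(p,0)$, the $p$-derivatives of $g_\nu$ being supplied by the first and second variation formulas (the first one being \eqref{1st-v-la} with $(D,\psi,g,f)$ replaced throughout by $(D_\nu,\psi_\nu,g_\nu,f_\nu)$). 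Thus the whole statement reduces to the claim that, as $\nu\ra\infty$ with $p_\nu\ra p_0\in\pa D$, the model domains $D_\nu(p_\nu)$ converge to the half space $\mathcal H=D(p_0)=\{2\Re\sum_\al\psi_\al(p_0)w_\al-1<0\}$ strongly enough that $g_\nu(p_\nu,\cdot)$, its regular part, and its first two $p$-derivatives converge to the corresponding objects attached to $\mathcal H$; the limits are then read off from the explicit Green function \eqref{G-H} as $\La_{\mathcal H}(0)=\la(p_0)$, $\pa_{p_\al}\la(p_0)=\la_\al(p_0)$ and $\pa_{p_\al}\pa_{\ov p_\be}\la(p_0)=\la_{\al\ov\be}(p_0)$.

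For the convergence of the model domains I would use $\psi_\nu\ra\psi$ in $C^\infty_{loc}$ as follows. Since $f_\nu(p,\cdot)$ is a defining function of $D_\nu(p)$ (the content of \eqref{defn-f}), and since $\psi(p_0)=0$ makes the argument $p_\nu-\psi_\nu(p_\nu)tw$ tend to $p_0$ for every fixed $t\in[0,1]$ and $w$, we get
\[
f_\nu(p_\nu,w)=2\Re\Big\{\sum_\al\int_0^1 w_\al\,\psi_{\nu,\al}\big(p_\nu-\psi_\nu(p_\nu)tw\big)\,dt\Big\}-1\ \longrightarrow\ 2\Re\sum_\al\psi_\al(p_0)w_\al-1,
\]
the convergence being in $C^\infty_{loc}(\mf C^n_w)$ because all derivatives of $\psi_\nu$ converge uniformly on compacts; moreover $|\pa_w f_\nu(p_\nu,\cdot)|$ stays bounded below on the relevant region because $|\pa\psi(p_0)|=1$. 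Hence $D_\nu(p_\nu)\ra\mathcal H$ with defining functions converging in every $C^k$, and the origin stays at a uniform positive distance from $\pa D_\nu(p_\nu)$. Now the regular parts $h_\nu(p_\nu,\cdot)$ are harmonic in $D_\nu(p_\nu)$ with boundary values $-|w|^{-2n+2}$, which are uniformly bounded on $\pa D_\nu(p_\nu)$; the maximum principle bounds them uniformly, Harnack makes them a normal family, and the convergence of the domains together with that of the boundary data forces every limit to equal $h(p_0,\cdot)$. So $h_\nu(p_\nu,\cdot)\ra h(p_0,\cdot)$ locally uniformly, in particular $h_\nu(p_\nu,0)\ra h(p_0,0)$, which is (a). For (b) and (c) I would insert this into the variation formulas: in \eqref{1st-v-la} the kernel $k_{1,\nu}^{(\al)}(p_\nu,\zeta)=\pa_{p_\al}f_\nu(p_\nu,\zeta)/|\pa_\zeta f_\nu(p_\nu,\zeta)|$ converges (on compacts, matching $\pa D_\nu(p_\nu)$ to $\pa\mathcal H$) to $k_1^{(\al)}(p_0,\zeta)$, and $|\pa_\zeta g_\nu(p_\nu,\zeta)|^2$ converges pointwise to $|\pa_\zeta G_{\mathcal H}(\zeta,0)|^2$, so once the integral is shown to pass to the limit it equals the value of \eqref{1st-v-la} at $p_0$, namely $\la_\al(p_0)$; part (c) is identical, with the second-order variation kernel in place of $k_1^{(\al)}\,|\pa_\zeta g|^2$.

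The step I expect to be the main obstacle is precisely this passage to the limit under the boundary integrals, since $\pa D_\nu(p_\nu)$ and $\pa\mathcal H$ are noncompact. What is needed is a decay estimate, uniform in $\nu$, of the shape $|\pa_\zeta g_\nu(p_\nu,\zeta)|\lesssim(1+|\zeta|)^{-2n}$ on $\pa D_\nu(p_\nu)$ near infinity --- and its analogue for the second-order kernel --- reflecting that the $D_\nu(p_\nu)$ are uniformly ``half space like'' near $p_0$; with such a $\nu$-independent integrable majorant, dominated convergence closes (b) and (c). Producing these stability estimates for the Green functions of the unbounded, degenerating model domains, uniformly as $p_\nu\ra\pa D$, is the real technical content; it is what is carried out in \cite{B}, building on the $C^2$-up-to-the-boundary regularity of $g$ and $\la$ established by the argument of \cite{LY}*{Ch.~3, Step~6}.
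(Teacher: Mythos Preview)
The paper does not give its own proof of this theorem: it is stated as a quotation from \cite{B} (``the following theorem from \cite{B}''), and the surrounding text only recalls enough of the variation machinery from \cite{LY} and \cite{B} to use the result, not to reprove it. So there is no in-paper argument to compare against; your proposal is a sketch of what the proof in \cite{B} looks like.

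Your outline is on the right track and matches the strategy actually used in \cite{B}. The reduction to convergence of the rescaled model domains $D_\nu(p_\nu)\to\mathcal H$ via the $C^\infty_{loc}$ convergence of the defining functions $f_\nu(p_\nu,\cdot)$ is exactly how the paper (and \cite{B}) frame the problem, and your identification of the main obstacle---passing to the limit in the boundary integrals \eqref{1st-v-la} (and their second-order analogue) over the noncompact hypersurfaces $\pa D_\nu(p_\nu)$, requiring a $\nu$-uniform decay bound of the type $|\pa_\zeta g_\nu(p_\nu,\zeta)|\lesssim (1+|\zeta|)^{-2n}$---is precisely the technical heart of the argument in \cite{B}. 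One small point of care in your sketch of (a): for each fixed $\nu$ the domain $D_\nu(p_\nu)$ is still bounded (it is an affine image of the bounded $D_\nu$), so the maximum principle applies without issue; the delicacy is that the diameters blow up as $\psi_\nu(p_\nu)\to 0$, so normal-family compactness of the regular parts $h_\nu(p_\nu,\cdot)$ on compacta of $\mathcal H$ requires a uniform bound independent of $\nu$, which again comes down to the same uniform Green-function estimates you flag for (b) and (c). With that understood, your proposal is a faithful summary of the proof in \cite{B}.
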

To see how this theorem leads to finer asymptotics, differentiate the normalised
Robin function
\[
\la=\La \psi^{2n-2},
\]
with respect to $a$, and then with respect to $b$, to obtain
\begin{equation}\label{La_a}
\La_a \psi^{2n-2} = \la_{a} - (2n-2) \la \psi^{-1} \psi_{a},
\end{equation}
and
\begin{equation}\label{La_ab}
\La_{ab} \psi^{2n-1} = \la_{ab}\psi - (2n-2) (\la_a\psi_{b}+\la_b \psi_a) +
(2n-2)(2n-1)\la \psi^{-1}\psi_a \psi_b - (2n-2)\la \psi_{ab}.
\end{equation}
While Theorem \ref{asymp-la} provides information about the derivatives of
$\la_{\nu}$ in the above formulae corresponding to $\La_{\nu}$, the
terms of the form $\psi_{\nu}^{-1} \big(\pa \psi_{\nu}/\pa p_a\big)$, can be
controlled by the following:
\begin{lem}\label{dpsi/psi}
Under the normalisation $(\dagger)$, we have for all $a \in \mf{I} \cup
\ov{\mf{I}}$, $a \neq n , \ov n$,
\[
\lim_{\nu \to \infty} \frac{1}{\psi_{\nu}(p_{\nu})}\frac{\pa \psi_{\nu}}{\pa
p_a}(p_{\nu}) = \frac{1}{2}\Big(\psi_{an}(0)+\psi_{a\ov n}(0)\Big).
\]
\end{lem}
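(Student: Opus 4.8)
The plan is to use the normalisation $(\dagger)$ to locate $p_\nu$ explicitly and then Taylor-expand $\psi_\nu$ and its first partials about the boundary point $0$. Since $\pa\psi_\nu(0)=(0,\ldots,0,1)$ we have $\psi_{\nu,j}(0)=\de_{jn}$, so writing $p_n=x_n+iy_n$ the real gradient of $\psi_\nu$ at $0$ is $(0,\ldots,0,2,0)$ (only the $x_n$-derivative $2\Re\,\psi_{\nu,n}(0)=2$ survives); hence the Euclidean inner normal to $\pa D_\nu$ at $0$ points along the negative $x_n$-axis. As $p_\nu$ lies on this normal and $p_\nu\to 0$, there are reals $t_\nu>0$ with $t_\nu\to 0$ and $p_\nu=(0,\ldots,0,-t_\nu)$.

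It is then cleanest to restrict everything to the real line $s\mapsto(0,\ldots,0,s)$. Put $h_\nu(s)=\psi_\nu(0,\ldots,0,s)$. Since $0\in\pa D_\nu$ we have $h_\nu(0)=0$, and $h_\nu'(0)=(\pa\psi_\nu/\pa x_n)(0)=2$, so $\psi_\nu(p_\nu)=h_\nu(-t_\nu)=-2t_\nu+O(t_\nu^2)$. For the numerator, fix $a\in\mf{I}\cup\ov{\mf{I}}$ with $a\neq n,\ov n$ and put $h_{\nu,a}(s)=(\pa\psi_\nu/\pa p_a)(0,\ldots,0,s)$. Because $\pa\psi_\nu(0)=(0,\ldots,0,1)$ forces $(\pa\psi_\nu/\pa p_a)(0)=0$ for such $a$, we get $h_{\nu,a}(0)=0$; and since $\pa/\pa x_n=\pa/\pa p_n+\pa/\pa\ov p_n$, we get $h_{\nu,a}'(0)=\psi_{\nu,an}(0)+\psi_{\nu,a\ov n}(0)$. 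Hence $(\pa\psi_\nu/\pa p_a)(p_\nu)=h_{\nu,a}(-t_\nu)=-t_\nu\big(\psi_{\nu,an}(0)+\psi_{\nu,a\ov n}(0)\big)+O(t_\nu^2)$. (This is where the symmetric combination $\psi_{an}+\psi_{a\ov n}$ in the lemma comes from.)

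To finish, divide and pass to the limit, checking that the remainders are uniform. Since $\{\psi_\nu\}$ converges to $\psi$ in $C^\infty$ on compact sets, the second derivatives of $h_\nu$ and the second derivatives of $h_{\nu,a}$ (hence the relevant third derivatives of $\psi_\nu$) are bounded uniformly in $\nu$ on a fixed neighbourhood of $0$, so both $O(t_\nu^2)$ terms above are $\le C t_\nu^2$ with $C$ independent of $\nu$. Dividing numerator and denominator by $-t_\nu$,
\[
\frac{1}{\psi_\nu(p_\nu)}\,\frac{\pa\psi_\nu}{\pa p_a}(p_\nu)=\frac{\psi_{\nu,an}(0)+\psi_{\nu,a\ov n}(0)+O(t_\nu)}{2+O(t_\nu)},
\]
and letting $\nu\to\infty$ — using $t_\nu\to 0$ together with $\psi_{\nu,an}(0)\to\psi_{an}(0)$ and $\psi_{\nu,a\ov n}(0)\to\psi_{a\ov n}(0)$ from the $C^\infty$-convergence — gives the asserted limit $\tfrac{1}{2}\big(\psi_{an}(0)+\psi_{a\ov n}(0)\big)$. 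There is no genuine obstacle here: the only points requiring care are the geometric fact that $(\dagger)$ pins $p_\nu$ to the real $x_n$-axis, the correct complexification $\pa/\pa x_n=\pa/\pa p_n+\pa/\pa\ov p_n$ (which is exactly what produces $\psi_{an}+\psi_{a\ov n}$ rather than a single term), and the uniform control of the Taylor remainders, which is immediate from $C^\infty$-convergence on compacta.
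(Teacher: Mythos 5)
Your proof is correct: the normalisation $(\dagger)$ does pin $p_\nu$ to the negative real $x_n$-axis, the first-order Taylor expansions of $\psi_\nu$ and $\pa\psi_\nu/\pa p_a$ along that axis give $-2t_\nu+O(t_\nu^2)$ and $-t_\nu\big(\psi_{\nu,an}(0)+\psi_{\nu,a\ov n}(0)\big)+O(t_\nu^2)$ respectively, and the $C^\infty$-convergence of $\{\psi_\nu\}$ on compacta supplies both the uniform remainder bounds and the convergence of the second derivatives at $0$. The paper itself gives no proof here, deferring to Lemma 6.2 of \cite{B}; your self-contained Taylor-expansion argument is the natural one and matches what that reference does.
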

For a proof, see \cite{B}*{Lemma 6.2}. We also need to compute $\la_a(0)$.
\begin{lem}\label{la_a-expt}
Under the normalisation $(\dagger)$, we have for all $a \in \mf{I} \cup
\ov{\mf{I}}$, $a \neq n , \ov n$,
\[
\la_a(0)=-(n-1)\big(\psi_{an}(0)+\psi_{a\ov n}(0)\big)
\]
\end{lem}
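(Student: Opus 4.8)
The strategy is to evaluate the boundary formula $(\ref{1st-v-la})$ for $\la_\al$ directly at the point $p=0\in\pa D$, where all the ingredients become explicit; the cases $a\in\ov{\mf{I}}$ will then follow by complex conjugation since $\la$ is real-valued, so I focus on $a=\al\in\mf{I}$ with $\al\neq n$. Under $(\dagger)$ one has $\pa\psi(0)=(0,\ldots,0,1)$, so $D(0)$ is the half-space $\mathcal{H}=\{w\in\mf{C}^n:2\Re w_n-1<0\}$; by $(\ref{sym-pt})$ the reflection of the origin across $\pa\mathcal{H}$ is $e_n:=(0,\ldots,0,1)$, so by $(\ref{G-H})$ the Green function of $\mathcal{H}$ with pole at the origin is
\[
g(0,\z)=|\z|^{2-2n}-|\z-e_n|^{2-2n},
\]
and by $(\ref{defn-f})$ the defining function of $\mathcal{H}$ is $f(0,w)=2\Re w_n-1$, whence $\pa_\z f(0,\z)=(0,\ldots,0,1)$ and $|\pa_\z f(0,\z)|=1$; in view of $(\ref{k-1})$ this reduces $k_1^{(\al)}(0,\z)$ to $(\pa f/\pa p_\al)(0,\z)$.

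Two explicit computations remain. First, on $\pa\mathcal{H}=\{\Re\z_n=1/2\}$ one has $|\z|=|\z-e_n|$; writing $\rho$ for this common value, the two radial pieces of
\[
\frac{\pa g}{\pa\z_\ga}(0,\z)=(1-n)\Big(|\z|^{-2n}\,\ov\z_\ga-|\z-e_n|^{-2n}\,(\ov\z_\ga-\de_{\ga n})\Big)
\]
cancel on $\pa\mathcal{H}$ except for $\ga=n$, so that $\pa_\z g(0,\z)=(1-n)\rho^{-2n}e_n$ there and hence $|\pa_\z g(0,\z)|^2=(n-1)^2\rho^{-4n}$ on $\pa\mathcal{H}$. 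Second, differentiating $(\ref{defn-f})$ with respect to $p_\al$ and setting $p=0$: one must keep in mind that $\pa/\pa p_\al$ applied to a term of the form $2\Re(\cdot)$ produces $\pa(\cdot)/\pa p_\al+\ov{\pa(\cdot)/\pa\ov p_\al}$ rather than $2\Re\big(\pa(\cdot)/\pa p_\al\big)$; but for $\al\neq n$ we have $\psi_\al(0)=\psi_{\ov\al}(0)=0$, so every term carrying a factor $\psi_\al$ or $\psi_{\ov\al}$ drops out and what survives is
\[
k_1^{(\al)}(0,w)=\frac{\pa f}{\pa p_\al}(0,w)=\sum_{\be=1}^{n}\psi_{\be\al}(0)\,w_\be+\sum_{\be=1}^{n}\psi_{\al\ov\be}(0)\,\ov w_\be.
\]

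Substituting both expressions into $(\ref{1st-v-la})$ reduces $\la_\al(0)$ to a linear combination of the moments $\int_{\pa\mathcal{H}}\z_\be\,|\z|^{-4n}\,dS_\z$ and their conjugates. Parametrising $\pa\mathcal{H}$ by $(\z',t)\in\mf{C}^{n-1}\times\mf{R}$ via $\z_n=\tfrac12+it$, so that $|\z|^2=|\z'|^2+t^2+\tfrac14$ and $dS_\z=dV(\z')\,dt$, every moment with $\be\neq n$ vanishes by the odd symmetry $\z_\be\mapsto-\z_\be$, and the $\be=n$ moment equals $\tfrac12\int_{\pa\mathcal{H}}|\z|^{-4n}\,dS_\z$ because the $\Im\z_n$-part is odd in $t$. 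This gives
\[
\la_\al(0)=-\,\frac{(n-1)^2}{2(n-1)\sigma_{2n}}\Big(\int_{\pa\mathcal{H}}|\z|^{-4n}\,dS_\z\Big)\big(\psi_{\al n}(0)+\psi_{\al\ov n}(0)\big),
\]
and the remaining scalar integral $\int_{\pa\mathcal{H}}|\z|^{-4n}\,dS_\z=\int_{\mf{R}^{2n-1}}\big(|u|^2+\tfrac14\big)^{-2n}\,du$ is evaluated by a routine polar-coordinate and Beta-function computation to equal $2\sigma_{2n}$; the prefactor then collapses to $n-1$, yielding $\la_\al(0)=-(n-1)\big(\psi_{\al n}(0)+\psi_{\al\ov n}(0)\big)$, and conjugation settles the case $a=\ov\al$. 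The one step that needs genuine care is the differentiation of the real part in $(\ref{defn-f})$ together with the bookkeeping of which terms vanish at $p=0$; everything else is an explicit substitution, an elementary symmetry argument, and one classical integral.
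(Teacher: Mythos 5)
Your proposal is correct and follows essentially the same route as the paper: evaluate \eqref{1st-v-la} at $p=0$, compute $k_1^{(a)}(0,\z)$ from \eqref{defn-f} using $\psi_a(0)=0$, use the explicit half-space Green function to get $\vert\pa_\z g(0,\z)\vert=(n-1)\vert\z\vert^{-2n}$ on $\pa\mathcal H$, kill the off-normal moments by symmetry, and reduce to the scalar integral $\int_{\pa\mathcal H}\vert\z\vert^{-4n}\,dS_\z=2\sigma_{2n}$ (the quantity $X=2$ in the paper). The only cosmetic differences are that you dispatch the $a\in\ov{\mf I}$ case by conjugation and summarize the final integral evaluation rather than carrying out the integration-by-parts/residue computation in full.
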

\begin{proof}
Let
\[
\mathcal{H}=D(0)=\{w \in \mf{C}^n : 2 \Re w_n-1<0\}.
\]
From (\ref{1st-v-la}) we have
\[
\la_a(0)=\frac{\pa g}{\pa p_a}(0,0)=-\frac{1}{(n-1)\sigma_{2n}}\int_{\pa
\mathcal{H}} {k_1^{(a)}(0, \z)} \vert \pa_{\z}g(0,\z) \vert^2 \, dS_{\z}.
\]
Let us first compute $k_1^{(a)}(0, \z)$ from (\ref{k-1}). Differentiating (\ref{defn-f}) with respect to $p_a$ and using $\psi_a(0)=0$, we obtain
\[
\frac{\pa f}{\pa p_a}(0, \z)=\sum_{j=1}^n \big(\z_{j} \psi_{aj}(0) + \ov \z_j \psi_{a\ov j}(0)
\big).
\]
Also,
\[
\frac{\pa f}{\pa \z_{\al}}(0, \z)= \psi_\al(0)
\]
so that $\vert \pa_{\z}f(0, \z) \vert =1$. Thus
\[
k_1^{(a)}(0,\z) = \frac{\pa f}{\pa p_a}(0,\z)\Big/{\vert \pa _{\z} f(0,\z)
\vert} = \sum_{j=1}^n \big(\z_{j} \psi_{aj}(0) + \ov \z_j \psi_{a\ov j}(0)
\big).
\]
From (\ref{G-H}),
\[
g(0,\z) = \vert \z \vert^{-2n+2} - \vert \z - 0^{*} \vert^{-2n+2}
\]
where $0^{*}=\big(0, \ldots, 0, 1\big)$
is the symmetric point of $0$ with respect to the hyperplane $\pa \mathcal{H}$.
Therefore,
\[
\frac{\pa g}{\pa \z_j}(0,\z)  = -(n-1) \big(\ov
\z_j \vert \z \vert^{-2n} -( \ov \z_j - \ov 0^{*}_j)\vert \z - 0^{*} \vert^{-2n}
\big),
\quad 1 \leq j \leq n.
\]
Note that for $\z \in \pa \mathcal{H}$, $\vert \z \vert = \vert \z -
0^{*}\vert$.
Therefore,
\[
\frac{\pa g}{\pa \z_i}(0,\z)  = -(n-1) \vert \z\vert^{-2n}\ov 0^{*}_j,
\quad \z \in \pa \mathcal{H}, 1 \leq j \leq n.
\]
This implies that
\[
\vert \pa_{\z}g(0, \z)\vert = (n-1) \vert \z \vert^{-2n}, \quad \z \in \pa H.
\]
Thus
\begin{equation}\label{la_a_0}
\la_a(0)= -\frac{(n-1)}{\sigma_{2n}}\sum_{j=1}^{n}\left\{\psi_{aj}(0) \int_{\pa
\mathcal{H}} \z_j \vert \z \vert^{-4n}dS_{\z}
+ \psi_{a\ov j}(0)\int_{\pa \mathcal{H}} \ov{\z}_j \vert \z \vert^{-4n}
dS_{\z}\right\}.
\end{equation}
Let us now compute the above integrals. First observe that for $1 \leq j \leq
n-1$,
\[
\frac{1}{\sigma_{2n}}\int_{\pa \mathcal{H}} \z_j \vert \z \vert^{-4n}
\, dS_{\z}=\frac{1}{\sigma_{2n}}\int_{-\infty}^{\infty}\cdots
\int_{-\infty}^{\infty}
\frac{x_j+iy_j}{(x_1^2+y_1^2+\cdots+1/4+y_n^2)^{2n}} \, dx_1dy_1\cdots
\widehat{dx_n}dy_n = 0,
\]
where as usual $\widehat{dx_n}$ means that the surface measure $dS_{\z}$ does
not contain the covector $dx_n$. Also,
\begin{multline*}
\frac{1}{\sigma_{2n}}\int_{\pa \mathcal{H}} \z_n \vert \z \vert^{-4n}
\, dS_{\z}=\int_{-\infty}^{\infty}\cdots \int_{-\infty}^{\infty}
\frac{1/2+iy_n}{(x_1^2+y_1^2+\cdots+1/4+y_n^2)^{2n}} \, dx_1dy_1\cdots
\widehat{dx_n}dy_n\\
=\frac{1}{2\sigma_{2n}}\int_{-\infty}^{\infty}\cdots \int_{-\infty}^{\infty}
\frac{1}{(x_1^2+y_1^2+\cdots+1/4+y_n^2)^{2n}} \, dx_1dy_1\cdots
\widehat{dx_n}dy_n\equiv \frac{1}{2}X.
\end{multline*}
Using polar coordinates,
\[
X=\frac{\sigma_{2n-1}}{\sigma_{2n}}\int_{0}^{\infty}\frac{r^{2n-2}}{(r^2
+1/4)^{2n}} \, dr \equiv I(2n-2,2n).
\]
Repeated integration by parts yields
\[
I(2n-2,2n)=\frac{2n-3}{2(2n-1)}\frac{2n-5}{2(2n-2)}\cdots
\frac{1}{2(n+1)}I(0,n+1).
\]
By the residue theorem,
\[
I(0,n+1)=\frac{\pi}{n!}(n+1)(n+2)\ldots(2n).
\]
Also, since $\sigma_{m}=2\pi^{m/2}/\Gamma(m/2)$,
\[
\frac{\sigma_{2n-1}}{\sigma_{2n}}=\frac{1}{\sqrt{\pi}}\frac{\Gamma(n)}{
\Gamma(n-1/2)}=\frac{1}{\pi} \frac{2^{n-1} (n-1)!}{(2n-3)(2n-5)\ldots 1}.
\]
Thus,
\begin{multline*}
X=\left\{\frac{1}{\pi} \frac{2^{n-1} (n-1)!}{(2n-3)(2n-5)\ldots
1}\right\} \left\{ \frac{2n-3}{2(2n-1)}\frac{2n-5}{2(2n-2)}\cdots
\frac{1}{2(n+1)} \right\} \left\{\frac{\pi}{n!}(n+1)(n+2)\ldots(2n)\right\}=2,
\end{multline*}
and hence
\[
\frac{1}{\sigma_{2n}}\int_{\pa \mathcal{H}} \z_n \vert \z \vert^{-4n} dS_{\z}=1.
\]
It follows from (\ref{la_a_0}) that
\[
\la_a(0)=-(n-1)\big(\psi_{an}(0)+\psi_{a\ov n}(0)\big).
\]
\end{proof}

\begin{cor}\label{f-asymp-La}
Under the normalisation $(\dagger)$, we have for all $a,b \in \mf{I} \cup
\ov{\mf{I}}$, $a \neq n , \ov n$,
\begin{itemize}
\item [(a)] $\displaystyle \lim_{\nu \to \infty} \frac{\pa \La_{\nu}}{\pa
p_a}(p_{\nu})\big(\psi_{\nu}(p_{\nu})\big)^{2n-2} = 0$,
\item [(b)] $\displaystyle \lim_{\nu \to \infty} \frac{\pa^2 \La_{\nu}}{\pa p_a
\pa p_b}(p_{\nu})\big(\psi_{\nu}(p_{\nu})\big)^{2n-1} =
-(n-1)\psi_b(0)\big(\psi_{an}(0)+\psi_{ a\ov n}(0)\big)
+(2n-2)\psi_{ab}(0)$.
\end{itemize}
\end{cor}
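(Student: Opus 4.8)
The plan is to pass to the limit $\nu\to\infty$ in the identities (\ref{La_a}) and (\ref{La_ab}), written for the perturbed data $(D_\nu,\psi_\nu)$ and evaluated at $p=p_\nu$, feeding in Theorem \ref{asymp-la}, Lemma \ref{dpsi/psi} and Lemma \ref{la_a-expt}. The normalisation $(\dagger)$ is what makes the bookkeeping work: since $\pa\psi(0)=(0,\ldots,0,1)$ we have $\psi_a(0)=0$ whenever $a\neq n,\ov n$, and since $0\in\pa D$,
\[
\lim_{\nu\to\infty}\la_\nu(p_\nu)=\la(0)=-\vert\pa\psi(0)\vert^{2n-2}=-1 .
\]

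For part (a), I would rewrite (\ref{La_a}) as $(\La_\nu)_a\,\psi_\nu^{2n-2}=(\la_\nu)_a-(2n-2)\,\la_\nu\,\psi_\nu^{-1}(\psi_\nu)_a$, evaluate at $p_\nu$ and let $\nu\to\infty$. By Theorem \ref{asymp-la}(b), $(\la_\nu)_a(p_\nu)\to\la_a(0)$; by Lemma \ref{dpsi/psi}, $\psi_\nu(p_\nu)^{-1}(\psi_\nu)_a(p_\nu)\to\tfrac12\big(\psi_{an}(0)+\psi_{a\ov n}(0)\big)$; together with $\la_\nu(p_\nu)\to-1$ this yields the limit $\la_a(0)+(n-1)\big(\psi_{an}(0)+\psi_{a\ov n}(0)\big)$, which is $0$ by Lemma \ref{la_a-expt}.

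For part (b), I would do the same with (\ref{La_ab}), treating the four groups of terms on its right side separately. The term $(\la_\nu)_{ab}(p_\nu)\,\psi_\nu(p_\nu)$ tends to $0$, since the second derivatives of $\la_\nu$ are locally uniformly bounded up to the boundary (part of the $C^2$-estimates of \cite{B}) while $\psi_\nu(p_\nu)\to0$; the term $(\la_\nu)_b(p_\nu)(\psi_\nu)_a(p_\nu)$ tends to $\la_b(0)\psi_a(0)=0$ because $\psi_a(0)=0$; and the remaining terms converge by Theorem \ref{asymp-la}, Lemma \ref{dpsi/psi} and $\la_\nu(p_\nu)\to-1$, producing the limit
\[
-(2n-2)\la_a(0)\psi_b(0)-(n-1)(2n-1)\big(\psi_{an}(0)+\psi_{a\ov n}(0)\big)\psi_b(0)+(2n-2)\psi_{ab}(0).
\]
Substituting $\la_a(0)=-(n-1)\big(\psi_{an}(0)+\psi_{a\ov n}(0)\big)$ from Lemma \ref{la_a-expt}, the first two summands combine --- the coefficients $2(n-1)^2$ and $-(n-1)(2n-1)$ add to $-(n-1)$ --- leaving exactly $-(n-1)\psi_b(0)\big(\psi_{an}(0)+\psi_{a\ov n}(0)\big)+(2n-2)\psi_{ab}(0)$, as claimed.

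The one point requiring care, and the one I would flag as the main obstacle, is the vanishing of the $(\la_\nu)_{ab}\psi_\nu$ term: Theorem \ref{asymp-la}(c) supplies convergence only of the \emph{mixed} second derivatives $\pa^2\la_\nu/\pa p_\al\pa\ov p_\be$, so to kill this term for a general $b\in\mf I\cup\ov{\mf I}$ one cannot quote convergence and must instead invoke the uniform $C^2$-bound on $\la_\nu$ near $p_0$. Everything else is routine limit-taking followed by the algebraic cancellation above.
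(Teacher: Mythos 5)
Your proof is correct and follows essentially the same route as the paper: pass to the limit in (\ref{La_a}) and (\ref{La_ab}) using Theorem \ref{asymp-la}, Lemma \ref{dpsi/psi} and Lemma \ref{la_a-expt}, with the same algebraic cancellation $2(n-1)^2-(n-1)(2n-1)=-(n-1)$. The point you flag about the $(\la_\nu)_{ab}\,\psi_\nu$ term is real --- the paper simply cites Theorem \ref{asymp-la}, whose part (c) covers only mixed second derivatives --- and your appeal to the uniform $C^2$-bounds on $\la_\nu$ up to the boundary is the right way to dispose of it.
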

\begin{proof}
Applying Theorem \ref{asymp-la} and Lemma \ref{dpsi/psi} to (\ref{La_a})
corresponding to $\La_{\nu}$,
\[
\lim_{\nu\to \infty}\frac{\pa \La_{\nu}}{\pa p_a}(p_{\nu})\psi(p_{\nu})^{2n-2}
= \la_a(0)-(n-1)\la(0)\big(\psi_{an}(0)+\psi_{a\ov n}(0)\big) = 0,
\]
in view of Lemma \ref{la_a-expt} and the fact that $\la(0)=-\vert \pa
\psi(0) \vert^{2n-2}=-1$. Hence (a) is proved.

Applying Theorem \ref{asymp-la} and Lemma \ref{dpsi/psi} to (\ref{La_ab})
corresponding to $\La_{\nu}$, we obtain
\begin{multline*}
\lim_{\nu \to \infty} \frac{\pa^2 \La_{\nu}}{\pa p_a
\pa p_b}(p_{\nu})\psi_{\nu}(p_{\nu})^{2n-1} =
-(2n-2)\la_a(0)\psi_b(0)+(n-1)(2n-1)\la(0)\psi_b(0)\big\{\psi_{an}(0)+\psi_{
a\ov n}(0)\big\}\\
-(2n-2)\la(0)\psi_{ab}(0)=-(n-1)\psi_b(0)\big\{\psi_{an}(0)+\psi_{ a\ov
n}(0)\big\}
+(2n-2)\psi_{ab}(0),
\end{multline*}
in view of Lemma \ref{la_a-expt} and the fact that $\la(0)=-1$. Hence (b) is
proved.
\end{proof}

Now, multiplying (\ref{g-al-be}) by $\psi$, we may write
\[
g_{\al \ov \be} \psi = \frac{\La_{\al \ov \be} \psi^{2n-1}}{\La \psi^{2n-2}} -
\frac{(\La_\al \psi^{2n-2})(\La_{\ov \be} \psi^{2n-1})}{(\La \psi^{2n-2})^2}.
\]
Applying Corollary \ref{f-asymp-La} to the above formula corresponding to
$g_{\nu}$, we obtain the following:
\begin{cor}\label{f-asymp-g}
Under the normalisation $(\dagger)$, we have for all $\al,\be \in \mf{I}$, $\al
\ne n$,
\[
{g_\nu}_{\al\ov \be}(p_{\nu})\psi_{\nu}(p_{\nu})=
(n-1)\psi_{\ov \be}(0)\big(\psi_{\al n}(0)+\psi_{ \al \ov
n}(0)\big)-(2n-2)\psi_{\al \ov \be}(0).
\]
\end{cor}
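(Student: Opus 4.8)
The plan is simply to pass to the limit $\nu \to \infty$ in the identity displayed just above the statement, that is in
\[
{g_{\nu}}_{\al\ov\be}\,\psi_{\nu} = \frac{\La_{\al\ov\be}\,\psi_{\nu}^{2n-1}}{\La\,\psi_{\nu}^{2n-2}} - \frac{\big(\La_{\al}\,\psi_{\nu}^{2n-2}\big)\big(\La_{\ov\be}\,\psi_{\nu}^{2n-1}\big)}{\big(\La\,\psi_{\nu}^{2n-2}\big)^{2}},
\]
where on the right $\La$ and its derivatives are those of $(D_{\nu},\psi_{\nu})$ and everything is evaluated at $p_{\nu}$. Each of the four building blocks appearing here has a limit supplied by the results of this section, so the proof reduces to recording these limits and assembling them; there is no real difficulty.

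First I would deal with the common quantity $\La(p_{\nu})\psi_{\nu}(p_{\nu})^{2n-2}$: by Corollary \ref{cor-asymp-La}(a) together with the normalisation $(\dagger)$ (under which $\vert\pa\psi(0)\vert = 1$) it converges to $-\vert\pa\psi(0)\vert^{2n-2} = -1$, and in particular its square converges to $1$. Next, for the numerator of the first fraction I would invoke Corollary \ref{f-asymp-La}(b) with $a = \al$ --- allowed because $\al \ne n$ --- and $b = \ov\be$, which yields
\[
\La_{\al\ov\be}(p_{\nu})\,\psi_{\nu}(p_{\nu})^{2n-1} \longrightarrow -(n-1)\psi_{\ov\be}(0)\big(\psi_{\al n}(0)+\psi_{\al\ov n}(0)\big) + (2n-2)\psi_{\al\ov\be}(0).
\]
Hence the first fraction converges to $(n-1)\psi_{\ov\be}(0)\big(\psi_{\al n}(0)+\psi_{\al\ov n}(0)\big) - (2n-2)\psi_{\al\ov\be}(0)$, which is precisely the right-hand side asserted in the corollary.

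It remains to check that the second fraction tends to $0$. Its denominator already tends to $1$, so it is enough to show that the product $\big(\La_{\al}\psi_{\nu}^{2n-2}\big)\big(\La_{\ov\be}\psi_{\nu}^{2n-1}\big)$ in its numerator tends to $0$. The first factor $\La_{\al}(p_{\nu})\psi_{\nu}(p_{\nu})^{2n-2}$ tends to $0$ by Corollary \ref{f-asymp-La}(a), again using $\al \ne n$, so it suffices that the second factor $\La_{\ov\be}(p_{\nu})\psi_{\nu}(p_{\nu})^{2n-1}$ remain bounded. Here one separates two cases: if $\be \ne n$ then $\La_{\ov\be}\psi_{\nu}^{2n-2} \to 0$ by Corollary \ref{f-asymp-La}(a), and the factor, being this quantity times $\psi_{\nu}(p_{\nu}) \to 0$, itself tends to $0$; if $\be = n$ then Corollary \ref{cor-asymp-La}(b) applied with $a = \ov n$ gives the finite limit $(2n-2)\psi_{\ov n}(0)\vert\pa\psi(0)\vert^{2n-2} = 2n-2$. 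Either way the second fraction vanishes in the limit, and ${g_{\nu}}_{\al\ov\be}(p_{\nu})\psi_{\nu}(p_{\nu})$ converges to the claimed value. The one spot that asks for a little attention is exactly this: one should not expect $\La_{\ov\be}\psi_{\nu}^{2n-1}$ to vanish (it does not when $\be = n$), only to stay bounded, with the vanishing of the whole second fraction coming from the prefactor $\La_{\al}\psi_{\nu}^{2n-2}$.
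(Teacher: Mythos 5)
Your proposal is correct and follows exactly the route the paper intends: pass to the limit in the displayed decomposition of $g_{\al\ov\be}\psi$, using Corollary \ref{cor-asymp-La}(a) for $\La\psi^{2n-2}\to-1$, Corollary \ref{f-asymp-La}(b) for the first fraction, and Corollary \ref{f-asymp-La}(a) (plus boundedness of $\La_{\ov\be}\psi^{2n-1}$, via Corollary \ref{cor-asymp-La}(b) when $\be=n$) to kill the second. The paper compresses all of this into one sentence; your case analysis for $\be=n$ versus $\be\ne n$ is the only detail it leaves implicit, and you handle it correctly.
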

These asymptotics are strong enough to controll $\det (g_{\al \ov
\be})$. Indeed,
\begin{cor}\label{asymp-det}
Under the normalisation $(\dagger)$, we have
\begin{equation}\label{lim-det}
\lim_{\nu \to \infty} \det\big({g_{\nu}}_{\al\ov\be}(p_{\nu})\big)
\psi_{\nu}(p_{\nu})^{n+1} =
(-1)^{n-1}(2n-2)^{n}\det\begin{pmatrix}\psi_{\al \ov \be}(0)\end{pmatrix}_{1\leq
\al, \be \leq n-1},
\end{equation}
which is nonzero as $D$ is strongly pseudoconvex.
\end{cor}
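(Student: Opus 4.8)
The plan is to rescale the Hermitian matrix $G_\nu = \big({g_\nu}_{\al\ov\be}(p_\nu)\big)_{1 \le \al,\be \le n}$ by powers of $\psi_\nu(p_\nu)$ chosen separately for the rows and for the $n$-th column, so that the rescaled matrices converge to a block lower triangular limit whose determinant is immediate. Doing this row-by-row rather than scaling the whole matrix uniformly is forced on us by the fact that the entries of $G_\nu$ blow up at two different rates: under $(\dagger)$ we have $\pa\psi(0) = (0,\dots,0,1)$, so $\psi_\al(0) = \psi_{\ov\al}(0) = 0$ for $\al < n$ while $\psi_n(0) = \psi_{\ov n}(0) = 1$, and hence by Corollary \ref{asymp-g}(a) the entry ${g_\nu}_{n\ov n}(p_\nu)$ grows like $\psi_\nu(p_\nu)^{-2}$ whereas by Corollary \ref{f-asymp-g} all the other entries grow only like $\psi_\nu(p_\nu)^{-1}$. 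This mismatch is exactly what renders $\det G_\nu$ indeterminate at the level of the crude asymptotics, as noted before the statement.

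Concretely, I would multiply every row of $G_\nu$ by $\psi_\nu(p_\nu)$ and the $n$-th column by an additional factor $\psi_\nu(p_\nu)$, obtaining a matrix $P_\nu$ with $\det P_\nu = \psi_\nu(p_\nu)^{n+1}\det G_\nu$. Then: (i) the top-left $(n-1)\times(n-1)$ block of $P_\nu$ has $(\al,\be)$ entry $\psi_\nu(p_\nu){g_\nu}_{\al\ov\be}(p_\nu)$, which by Corollary \ref{f-asymp-g} (applicable since $\al < n$, and with $\psi_{\ov\be}(0) = 0$ because $\be < n$) converges to $-(2n-2)\psi_{\al\ov\be}(0)$; (ii) the entries of the $n$-th column lying above the corner, $\psi_\nu(p_\nu)^2{g_\nu}_{\al\ov n}(p_\nu)$ with $\al < n$, converge by Corollary \ref{asymp-g}(a) to $(2n-2)\psi_\al(0)\psi_{\ov n}(0) = 0$; (iii) the corner entry $\psi_\nu(p_\nu)^2{g_\nu}_{n\ov n}(p_\nu)$ converges to $(2n-2)\psi_n(0)\psi_{\ov n}(0) = 2n-2$; (iv) the entries of the $n$-th row to the left of the corner equal the conjugates of $\psi_\nu(p_\nu){g_\nu}_{\be\ov n}(p_\nu)$, $\be < n$, by Hermitian symmetry of the $\La$-metric and reality of $\psi_\nu$, hence converge as well by Corollary \ref{f-asymp-g} (their limits will play no role). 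Therefore $P_\nu$ converges to the block lower triangular matrix $P$ with top-left block $B := -(2n-2)\big(\psi_{\al\ov\be}(0)\big)_{1 \le \al,\be \le n-1}$, zero upper-right block, and bottom-right entry $2n-2$.

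By continuity of the determinant and expansion of $P$ along its last column,
\[
\lim_{\nu \to \infty}\det G_\nu \cdot \psi_\nu(p_\nu)^{n+1} = \det P = (2n-2)\det B = (2n-2)\big(-(2n-2)\big)^{n-1}\det\big(\psi_{\al\ov\be}(0)\big)_{1 \le \al,\be \le n-1} = (-1)^{n-1}(2n-2)^n\det\big(\psi_{\al\ov\be}(0)\big)_{1 \le \al,\be \le n-1},
\]
which is nonzero: since $\pa\psi(0) = (0,\dots,0,1)$, the matrix $\big(\psi_{\al\ov\be}(0)\big)_{1 \le \al,\be \le n-1}$ is the Levi form of $\pa D$ at $0$ on the complex tangent space $\{w_n = 0\}$, and strong pseudoconvexity of $D$ makes it positive definite. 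The only real choice in the argument is to give the $n$-th column the extra factor $\psi_\nu(p_\nu)$, which is what forces the $(\al,n)$ entries with $\al < n$ to carry the higher power $\psi_\nu(p_\nu)^2$ and so to vanish in the limit, leaving a triangular matrix; everything else is bookkeeping with Corollaries \ref{asymp-g} and \ref{f-asymp-g} together with the Hermitian symmetry of the metric, so I do not anticipate a genuine obstacle here.
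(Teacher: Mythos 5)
Your proof is correct and is essentially the paper's argument repackaged: the paper expands $\det({g_\nu}_{\al\ov\be})$ along the $n$-th row, showing each cofactor times $\psi_\nu^{n-1}$ converges and that only ${g_\nu}_{n\ov n}\psi_\nu^{2}\cdot{\De_\nu}_{n\ov n}\psi_\nu^{n-1}$ survives because $\psi_{\ov\be}(0)=0$ for $\be<n$, which is exactly your row/column rescaling followed by continuity of the determinant. The inputs (Corollaries \ref{asymp-g} and \ref{f-asymp-g}) and the final arithmetic coincide, so there is nothing to fix.
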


\begin{proof}
Let $(\Delta_{\al \ov \be})$ be the cofactor
matrix of $(g_{\al\ov\be})$. Then
\begin{equation}\label{det}
\det(g_{\al\ov\be})=g_{n\ov 1} \De_{n \ov 1}+\ldots+g_{n\ov n} \De_{n\ov n}.
\end{equation}
Note that
\begin{multline*}
\De_{n \ov \be}\psi^{n-1} = \psi^{n-1}(-1)^{n+\be} \det
\begin{pmatrix}
g_{1\ov 1} & \ldots  & g_{1\ov{\be-1}}  & g_{1 \ov{\be+1}} & \ldots & g_{1\ov
n}\\
\vdots & \vdots & \vdots & \vdots & \vdots\\
g_{n-1\ov 1} & \ldots & g_{n-1\ov{\be-1}} &g_{n-1\ov{\be+1}} & \ldots & g_{n-1
\ov n}
\end{pmatrix}\\
= (-1)^{n+\be}\det\begin{pmatrix}
g_{1\ov 1}\psi & \ldots  & g_{1\ov{\be-1}}\psi  & g_{1 \ov{\be+1}}\psi & \ldots
& g_{1\ov n}\psi\\
\vdots & \vdots & \vdots & \vdots & \vdots\\
g_{n-1\ov 1}\psi & \ldots & g_{n-1\ov{\be-1}}\psi &g_{n-1\ov{\be+1}}\psi &
\ldots & g_{n-1 \ov n}\psi
\end{pmatrix}.
\end{multline*}
Applying Corollary \ref{f-asymp-g} to the above formula corresponding to
$g_{\nu}$, we observe that
\[
\lim_{\nu\to\infty} {\De_{\nu}}_{n\ov\be}(p_{\nu})
\big(\psi_{\nu}(p_{\nu})\big)^{n-1}
\]
exists and is finite. In addition, using $\psi_{j}(0)=0$ for
$1 \leq j<n$,
\[
\lim_{\nu \to \infty} {\De_{\nu}}_{n\ov n} \psi^{n-1}=
(-1)^{n-1}(2n-2)^{n-1}\det\begin{pmatrix}\psi_{\al\ov
\be}(0)\end{pmatrix}_{1\leq \al, \be \leq n-1}.
\]
Multiplying (\ref{det}) by $\psi^{n+1}$, we may write
\[
\det(g_{\al\ov\be}) \psi^{n+1}= (g_{n\ov 1} \psi^2)( \De_{n \ov 1}
\psi^{n-1})+\ldots+(g_{n\ov n} \psi^2)( \De_{n\ov n} \psi^{n-1}).
\]
Now applying the above asymptotics of the cofactors and Corollary \ref{asymp-g} to this formula corresponding to $g_{\nu}$, we obtain (\ref{lim-det}).
\end{proof}

\begin{cor}\label{asymp-g-inv}
Under the normalisation $(\dagger)$,
\[
\lim_{\nu\to\infty}\frac{1}{\psi_{\nu}(p_{\nu})^2}g^{n \ov \be}(p_{\nu})
\]
exists and is finite for all $\be \in \mf{I}$, and in particular,
\[
\lim_{\nu\to\infty}\frac{1}{\psi_{\nu}(p_{\nu})^2}g^{n \ov n}(p_{\nu})=
\frac{1}{2n-2}.
\]
\end{cor}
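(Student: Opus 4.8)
The plan is to reduce everything to Cramer's rule together with the determinant and cofactor asymptotics already established in (the proof of) Corollary~\ref{asymp-det}. Indeed, the inverse of the Hermitian matrix $\big({g_{\nu}}_{\al\ov\be}\big)$ has entries that are ratios of the cofactors ${\De_{\nu}}_{\,n\ov\be}$ to the determinant $\det\big({g_{\nu}}_{\al\ov\be}\big)$, and both of these were analysed under the normalisation $(\dagger)$ in the course of proving Corollary~\ref{asymp-det}: the cofactors ${\De_{\nu}}_{\,n\ov\be}(p_{\nu})\,\psi_{\nu}(p_{\nu})^{n-1}$ have finite limits for every $\be$, with ${\De_{\nu}}_{\,n\ov n}(p_{\nu})\,\psi_{\nu}(p_{\nu})^{n-1}\to(-1)^{n-1}(2n-2)^{n-1}\det\big(\psi_{\al\ov\be}(0)\big)_{1\le\al,\be\le n-1}$, while $\det\big({g_{\nu}}_{\al\ov\be}(p_{\nu})\big)\,\psi_{\nu}(p_{\nu})^{n+1}$ tends to the nonzero limit $(-1)^{n-1}(2n-2)^{n}\det\big(\psi_{\al\ov\be}(0)\big)_{1\le\al,\be\le n-1}$.

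Concretely, I would first record that Cramer's rule gives $g^{n\ov\be}=\De_{n\ov\be}\big/\det\big(g_{\al\ov\be}\big)$ for every $\be\in\mf{I}$, where $\De_{n\ov\be}$ is the cofactor used in the proof of Corollary~\ref{asymp-det}. Then I would multiply and divide by the appropriate power of $\psi_{\nu}$, writing
\[
\frac{1}{\psi_{\nu}(p_{\nu})^2}\,g^{n\ov\be}(p_{\nu})=\frac{{\De_{\nu}}_{\,n\ov\be}(p_{\nu})\,\psi_{\nu}(p_{\nu})^{n-1}}{\det\big({g_{\nu}}_{\al\ov\be}(p_{\nu})\big)\,\psi_{\nu}(p_{\nu})^{n+1}}.
\]
The numerator has a finite limit as $\nu\to\infty$ (by the proof of Corollary~\ref{asymp-det}) and the denominator converges to a nonzero limit (by Corollary~\ref{asymp-det}), so the quotient has a finite limit for every $\be\in\mf{I}$. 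Specialising to $\be=n$, the two determinantal factors $\det\big(\psi_{\al\ov\be}(0)\big)_{1\le\al,\be\le n-1}$ and the signs $(-1)^{n-1}$ cancel, leaving $\lim_{\nu\to\infty}\psi_{\nu}(p_{\nu})^{-2}\,g^{n\ov n}(p_{\nu})=(2n-2)^{n-1}/(2n-2)^{n}=1/(2n-2)$.

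I do not expect a genuine obstacle here: once Corollary~\ref{asymp-det} and the cofactor estimates inside its proof are in hand, the argument is pure bookkeeping. The only point needing a moment's care is the convention matching — verifying that the cofactor appearing in Cramer's rule for the $(n,\ov\be)$-entry of the inverse is precisely the $\De_{n\ov\be}$ of the proof of Corollary~\ref{asymp-det} (which is immediate from the Hermitian symmetry of $\big(g_{\al\ov\be}\big)$, and for $\be=n$ is just the real leading principal minor of order $n-1$), and then checking the exact cancellation of the $\det\big(\psi_{\al\ov\be}(0)\big)$ factor in the case $\be=n$.
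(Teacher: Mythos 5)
Your proposal is correct and is essentially identical to the paper's own proof: the paper also writes $g^{n\ov\be}=\De_{n\ov\be}/\det(g_{i\ov j})$, rescales as $\psi^{-2}g^{n\ov\be}=\big(\De_{n\ov\be}\psi^{n-1}\big)\big/\big(\det(g_{i\ov j})\psi^{n+1}\big)$, and invokes Corollary~\ref{asymp-det} together with the cofactor asymptotics established in its proof. The cancellation for $\be=n$ yielding $(2n-2)^{n-1}/(2n-2)^{n}=1/(2n-2)$ is exactly as you describe.
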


\begin{proof}
Dividing
\[
g^{n\ov \be}=\frac{\Delta_{n \ov \be}}{\det(g_{i \ov j})}
\]
by $\psi^2$, we may write
\[
\frac{1}{\psi^2}g^{n\ov
\be}=\frac{1}{\det(g_{i\ov j})\psi^{n+1}}\Delta_{n \ov \be}\psi^{n-1}.
\]
Now applying Corollary \ref{asymp-det} and the asymptotics of the cofactors in its proof
to the above formula corresponding to $g_{\nu}$ we obtain the desired results.
\end{proof}

It is not known whether $\la$ is $C^3$-smooth up to $\pa D$ and so the above procedure can not be applied to obtain finer asymptotics of the third order
derivatives of $\La$ and thus of the
derivatives of the metric components. However in \cite{LY}*{Chap. 6}, a relation
between the third order derivatives of $\La$ and certain derivatives of $g(p, w)$ was established
which can be used to obtain information about finer asymptotics. Indeed, recall
that
\[
g(p,w)=\big(\psi(p)\big)^{2n-2}G(p,z),
\]
where $p,z \in D$ and $w=(z-p)/\big(-\psi(p)\big)$. Differentiating the above
equation with respect to $z_{\al}$ and with respect to $p_{\al}$, away from the
diagonal $z=p$, we obtain
\[
\frac{1}{-\psi}\frac{\pa g}{\pa w_{\al}} = \psi^{2n-2}\frac{\pa
G}{\pa z_{\al}},
\]
and
\[
\frac{\pa g}{\pa p_{\al}}+\frac{1}{\psi}\frac{\pa g}{\pa
w_{\al}}+\frac{1}{\psi^2}\psi_{\al}
\sum_{i=1}^{n}\left\{(z_i-p_i)\frac{\pa g}{\pa w_i} + (\ov z_i-\ov p_i)
\frac{\pa g}{\pa \ov w_i} \right\} = (2n-2) \psi^{2n-3}\psi_{\al} G+
\psi^{2n-2}\frac{\pa G}{\pa p_{\al}}.
\]
Adding these two equations and using $w_i=(z_i-p_i)/(-(\psi(p))$,
$g=\psi^{2n-2}G$, we obtain
\begin{equation}\label{g_al-G_al}
\frac{\pa g}{\pa p_{\al}}- (2n-2)\frac{1}{\psi}\psi_{\al} g
-\frac{1}{\psi}\psi_{\al} \sum_{i=1}^{n}\left(w_i\frac{\pa
g}{\pa w_i} + \ov w_i \frac{\pa g}{\pa \ov w_i} \right) =
\psi^{2n-2}\left(\frac{\pa G}{\pa z_{\al}}+\frac{\pa G}{\pa p_{\al}}\right)
\end{equation}
away from the diagonal $z=p$.
Now set
\[
G_{\al}(p,z)=\left(\frac{\pa G}{\pa p_{\al}}+\frac{\pa G}{\pa
z_{\al}}\right)(p,z) \quad \text{for $(p,z) \in D \times D$, $\al=1, \ldots,
n$},
\]
which is, by \cite{LY}*{Prop. 6.1}, a real analytic, symmetric function in $D
\times D$, harmonic in $z$ and $p$ and satisfy
\begin{equation}\label{G_al-diag}
G_{\al}(p,p)=\frac{\pa \La}{\pa p_{\al}}(p).
\end{equation}
Also set
\begin{multline}\label{def-g_al}
g_{\al}(p,w) = \psi(p)\frac{\pa g}{\pa p_{\al}}(p,w)-\psi_{\al}(p)
\left\{(2n-2)g(p,w) + \sum_{i=1}^{n}\left(w_i\frac{\pa g}{\pa
w_i}(p,w) + \ov w_i \frac{\pa g}{\pa \ov w_i}(p,w) \right)\right\}\\
p \in\ov D, w \in D(p), 1\leq \al \leq n.
\end{multline}
which is, by \cite{LY}*{Prop. 6.2}, a harmonic function of $w \in D(p)$ for each
$p \in \ov D$, and satisfies
\begin{equation}\label{g_al-0}
g_{\al}(p,0)= \psi(p)\la_{\al}(p) - (2n-2) \psi_{\al}(p) \la(p) = \La_{\al}(p).
\end{equation}
Now (\ref{g_al-G_al}) can be written as
\[
g_{\al}(p,w)=\big(\psi(p)\big)^{2n-1}G_{\al}(p,z)
\]
for $p,z \in D$ and $w = (z-p)/(-\psi(p))$. Repeating the above calculation for
this relation, we obtain
\begin{equation}\label{g_albe-G_albe}
\frac{\pa g_{\al}}{\pa \ov p_{\be}}- (2n-1)\frac{1}{\psi} \psi_{\ov{\be}}
g_{\al} -\frac{1}{\psi}\psi_{\ov{\be}}
\sum_{i=1}^{n}\left(w_i\frac{\pa g_{\al}}{\pa w_i} + \ov w_i \frac{\pa
g_{\al}}{\pa \ov w_i} \right) =
\psi^{2n-1}\left(\frac{\pa G_{\al}}{\pa \ov z_{\be}}+\frac{\pa G_{\al}}{\pa
\ov p_{\be}}\right).
\end{equation}
Again set
\[
G_{\al\ov \be}(p,z) = \left(\frac{\pa G_{\al}}{\pa \ov p_{\be}}+\frac{\pa
G_{\al}}{\pa \ov z_{\be}}\right)(p,z) \quad \text{for $(p,z) \in D \times D$,
$1 \leq \al, \be \leq n$},
\]
and
\begin{multline}\label{def-g_albe}
g_{\al\ov\be}(p,w)= \psi(p)\frac{\pa g_{\al}}{\pa \ov p_{\be}}(p,w)-
(2n-1)\psi_{\ov{\be}}(p) g_{\al}(p,w)-\psi_{\ov{\be}}(p)
\sum_{i=1}^{n}\left(w_i\frac{\pa g_{\al}}{\pa w_i}(p,w) + \ov w_i \frac{\pa
g_{\al}}{\pa \ov w_i}(p,w) \right),\\ p \in \ov D, w \in D(p), 1 \leq \al,\be
\leq n.
\end{multline}
Then (\ref{g_albe-G_albe}) can be written as
\[
g_{\al\ov\be}(p,w)=\big(\psi(p)\big)^{2n}G_{\al\ov\be}(p,z),
\]
where $p,z \in D$ and $w=(z-p)/(-\psi(p))$. Differentiating the above relation
with respect to $z_c$, we obtain
\begin{equation}\label{g-G}
\frac{1}{-\psi}\frac{\pa g_{\al\ov \be}}{\pa w_{c}} = \psi^{2n}\frac{\pa
G_{\al\ov \be}}{\pa z_c}.
\end{equation}
On the otherhand, by \cite{LY}*{6.14},
\begin{equation}\label{La-G}
\frac{\pa^3 \La}{\pa p_{\al} \pa \ov p_{\be} \pa p_c}(p)=2\frac{\pa G_{\al\ov
\be}}{\pa z_c}(p,p).
\end{equation}
Combining (\ref{g-G}), (\ref{La-G}) with (\ref{def-g_albe}), we obtain
\begin{equation}\label{3rd-der-La}
\frac{\pa^3 \La}{\pa p_{\al} \pa \ov p_{\be} \pa p_c}(p) (\psi(p)\big)^{2n} =
-\frac{2}{\psi(p)}\frac{\pa g_{\al\ov \be}}{\pa w_{c}}(p,0)
=-2\frac{\pa^2 g_{\al}}{\pa w_c \pa \ov
p_{\be}}(p,0)+\frac{4n}{\psi(p)}\psi_{\ov{\be}}(p)
\frac{\pa g_{\al}}{\pa w_c}(p,0).
\end{equation}
Thus, information about the third order derivatives
of $\La$ can be obtained by studying the derivatives of $g_{\al}(p,w)$.

\begin{lem}\label{asymp-g_al}
Under the hypothesis of Theorem 2.4, we have for all $\al \in I$, $c \in I \cup
\ov I$,
\begin{align*}
\lim_{\nu \to \infty} \frac{\pa {g_{\nu}}_{\al}}{\pa w_c}(p_{\nu}, 0) =
\frac{\pa g_{\al}}{\pa w_c}(0,0)=(n-1)(2n-1)\psi_{\al}(p_0)\psi_{c}(p_0)
\vert \pa \psi(p_0) \vert^{2n-2},
\end{align*}
where ${g_{\nu}}_{\al}(p,w)$ is defined by (\ref{def-g_al}).
\end{lem}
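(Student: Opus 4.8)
\emph{Proof plan.} The strategy is to sidestep the singularity of $g_\nu(p,w)$ at $w=0$ by using the identity $g_\al(p,w)=\psi(p)^{2n-1}G_\al(p,z)$ with $z=p-\psi(p)w$, obtained from (\ref{g_al-G_al}) above; here $G_\al=\pa G/\pa p_\al+\pa G/\pa z_\al$ is real analytic and symmetric on $D\times D$ with $G_\al(p,p)=\La_\al(p)$ by (\ref{G_al-diag}). Since $z=p-\psi(p)w$ is affine in $w$ at fixed $p$, with $\pa z_c/\pa w_c=-\psi(p)$ for $c\in\mf{I}$ and $\pa\ov z_{c'}/\pa\ov w_{c'}=-\psi(p)$ when $c=\ov{c'}\in\ov{\mf{I}}$ (recall $\psi$ is real), differentiating this identity in $w_c$ and putting $w=0$ gives, for all $c\in\mf{I}\cup\ov{\mf{I}}$,
\[
\frac{\pa g_\al}{\pa w_c}(p,0)=-\psi(p)^{2n}\,\frac{\pa G_\al}{\pa z_c}(p,p),
\]
the analogue, one order lower, of (\ref{g-G}).

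The next step is the analogue of (\ref{La-G}): $\pa G_\al/\pa z_c(p,p)=\frac12\,\pa^2\La/\pa p_\al\pa p_c(p)$. Differentiating the diagonal restriction $p\mapsto G_\al(p,p)=\La_\al(p)$ in the $c$-th variable gives $\big(\pa G_\al/\pa p_c+\pa G_\al/\pa z_c\big)(p,p)=\La_{\al c}(p)$, and the symmetry of $G_\al$ forces the two terms on the left to coincide; hence each equals $\frac12\La_{\al c}(p)$, and likewise with $\ov p_{c'}$ when $c=\ov{c'}$. Substituting,
\[
\frac{\pa g_\al}{\pa w_c}(p,0)=-\frac12\,\psi(p)^{2n}\,\frac{\pa^2\La}{\pa p_\al\,\pa p_c}(p),\qquad p\in D,\ \al\in\mf{I},\ c\in\mf{I}\cup\ov{\mf{I}}.
\]
Applying this to $(D_\nu,\psi_\nu)$ at $p_\nu$ and letting $\nu\to\infty$, the right-hand side is governed by Corollary \ref{cor-asymp-La}(c), which gives
\[
\lim_{\nu\to\infty}\frac{\pa {g_\nu}_\al}{\pa w_c}(p_\nu,0)=-\frac12\big(-(2n-2)(2n-1)\big)\psi_\al(p_0)\psi_c(p_0)|\pa\psi(p_0)|^{2n-2}=(n-1)(2n-1)\psi_\al(p_0)\psi_c(p_0)|\pa\psi(p_0)|^{2n-2},
\]
the asserted value.

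Finally, to see that this limit is also $\pa g_\al/\pa w_c(0,0)$ -- the same quantity for the limiting configuration, namely the half-space $\mathcal H=D(p_0)$ with pole at the origin, whose Green function $g(p_0,w)=|w|^{-2n+2}-|w-0^{*}|^{-2n+2}$ with $0^{*}=\ov\pa\psi(p_0)/|\pa\psi(p_0)|^2$ is given explicitly by (\ref{G-H})--(\ref{sym-pt}) -- I would substitute $g(p_0,\cdot)$ into (\ref{def-g_al}). The singular term $|w|^{-2n+2}$ drops out of $g_\al(p_0,\cdot)$, since it does not depend on $p$ and $(2n-2)|w|^{-2n+2}+\sum_{i}\big(w_i\,\pa/\pa w_i+\ov w_i\,\pa/\pa\ov w_i\big)|w|^{-2n+2}=0$ by Euler's identity (the bracketed operator being the real radial vector field), and a short Wirtinger-derivative computation of the remaining regular expression at $w=0$, using $|0^{*}|=1/|\pa\psi(p_0)|$ and $\ov{0^{*}_{c}}=\psi_c(p_0)/|\pa\psi(p_0)|^2$, returns the same explicit value.

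I do not expect a serious obstacle: the structural content is simply that $\pa g_\al/\pa w_c$ at $w=0$ equals, up to the explicit factor $-\frac12\psi^{2n}$, a plain second derivative of $\La$, whose boundary asymptotics were already obtained in Corollary \ref{cor-asymp-La}. The only points requiring care are the Wirtinger chain rule for the affine substitution (and its variant for antiholomorphic indices $c\in\ov{\mf{I}}$), and the verification of $\pa G_\al/\pa z_c(p,p)=\frac12\La_{\al c}(p)$, which is proved exactly as (\ref{La-G}) but one order lower, from the symmetry of $G_\al$.
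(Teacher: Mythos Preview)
Your proposal is correct and follows a genuinely different route from the paper. The paper establishes the first equality by proving uniform convergence of ${g_{\nu}}_{\al}(p_\nu,\cdot)$ to $g_\al(p_0,\cdot)$ on compacta of $\mathcal H$ (assembling convergence of $g_\nu$, $\pa g_\nu/\pa w_i$, and $\pa g_\nu/\pa p_\al$ from results in \cite{B}, then using the mean value property to handle the origin), and deduces the derivative limit from harmonicity; the explicit value is then obtained by writing $g_\al(p_0,w)$ in terms of the auxiliary function $g_0$ of (\ref{def-g_0}) and differentiating the closed-form expression~(\ref{g_0-expt}). Your approach instead exploits the identity $g_\al(p,w)=\psi(p)^{2n-1}G_\al(p,z)$ and the symmetry of $G_\al$ to obtain the clean formula $\pa g_\al/\pa w_c(p,0)=-\tfrac12\,\psi(p)^{2n}\La_{\al c}(p)$ for $p\in D$, which immediately reduces the limit to Corollary~\ref{cor-asymp-La}(c); this is shorter and avoids the convergence machinery from \cite{B} altogether. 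What the paper's approach buys is the stronger intermediate statement that ${g_{\nu}}_\al(p_\nu,\cdot)\to g_\al(p_0,\cdot)$ uniformly on compacta, which is the template reused (one order higher) in Lemma~\ref{asymp-2nd-der-g_al}; your identity does not directly extend there because the analogue $\pa G_{\al}/\pa\ov p_\be+\pa G_{\al}/\pa\ov z_\be$ need not be symmetric in $(p,z)$. For the middle equality you correctly note that at $p_0\in\pa D$ the term $\psi(p_0)\,\pa g/\pa p_\al$ vanishes and the singular contribution $|w|^{-2n+2}$ is annihilated by the Euler operator; the remaining short computation you outline is precisely what the paper carries out in (\ref{g_0-expt})--(\ref{der-g_al-expt}).
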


\begin{proof}
We know that ${g_{\nu}}_{\al}(p_{\nu},w)$ is a harmonic function of $w \in
D_{\nu}(p_{\nu})$ and $g_{\al}(p_0, w)$ is a harmonic function of $w \in
\mathcal{H} = D(p_0)$. First, we will show that $\{{g_{\nu}}_{\al}(p_{\nu},w)\}$
converges uniformly on compact subsets of $\mathcal{H}$ to $g_{\al}(p_0, w)$.
The first equality then follows from the harmonicity of these
functions. Note that $\{D_{\nu}(p_{\nu})\}$ is a
$C^{\infty}$-perturbation of $\mathcal{H}$ (see the proof of Theorem 1.3 in
\cite{B}). Therefore by \cite{B}*{Prop 3.1},
$\{g_{\nu}(p_{\nu},w)\}$ converges
uniformly on compact subsets of $\mathcal{H}\setminus\{0\}$ to $g(p_0, w)$. By
harmonicity,
\[
\left\{\frac{\pa g_{\nu}}{\pa w_i}(p_{\nu},w)\right\}, \quad 1 \leq i \leq n,
\]
also converges uniformly on compact subsets of $\mathcal{H} \setminus\{0\}$ to
$(\pa g/\pa w_i)(p_0, w)$.  Also by \cite{B}*{Remark 4.5},
\[
\left\{\frac{\pa g_{\nu}}{\pa p_{\al}}(p_{\nu},w)\right\}
\]
converges uniformly on compact subsets of $\mathcal{H}$ to $(\pa g/\pa
p_{\al})(p_0,w)$. It follows from (\ref{def-g_al})
that $\{{g_{\nu}}_{\al}(p_{\nu},w)\}$ converges uniformly to $g_{\al}(p_0,w)$ on
compact subsets of $\mathcal{H} \setminus \{0\}$ and hence of $\mathcal{H}$ by
the mean value theorem.

Now to calculate $\frac{\pa g_{\al}}{\pa w_c}(p_0,0)$ explicitly, let us
write (\ref{def-g_al}) in the form
\begin{equation}\label{new-def-g_al}
g_{\al}(p,w)=\psi(p)\frac{\pa g}{\pa p_{\al}}(p,w) - (n-1) \psi_{\al}(p) \big(g_0(p,w) + \ov{g_0(p,w)}\big),
\end{equation}
where
\begin{equation}\label{def-g_0}
g_0(p,w) = g(p,w) + \frac{1}{n-1}\sum_{i=1}^{n}w_i \frac{\pa g}{\pa w_i}(p,w).
\end{equation}
Note that if $w_0\in \mf{C}^n$ then
for $w \neq w_0$,
\begin{equation}\label{der-formula}
\sum_{i=1}^{n} w_i \frac{\pa}{\pa w_i} \vert w - w_0 \vert^{-2n+2} = -(n-1)
\vert
w - w_0 \vert^{-2n} \sum_{i=1}^{n} w_i \ov{(w_i - w_{0i})}.
\end{equation}
This implies that the singularities in the right hand side of (\ref{def-g_0})
get cancelled and hence (\ref{def-g_0}) defines a harmonic function of $w \in
D(p)$ for each $p \in
\ov D$. From (\ref{G-H}), we have
\[
g(p_0,w) = \vert w \vert^{-2n+2} - \vert w - 0^{*} \vert ^{-2n+2},
\]
where $0^{*}=\ov\pa \psi(p_0)/\vert \pa \psi(p_0)\vert^2$ is the symmetric point
of the origin with respect to $\pa D(p_0)$.
Therefore,
\begin{multline}\label{g_0-expt}
g_0(p_0,w) = - \vert w - 0^{*} \vert^{-2n+2} + \vert w - 0^{*}
\vert^{-2n}\sum_{i=1}^{n} w_i \ov{(w_i- 0^{*}_i)}\\ =  - \vert w - 0^{*}
\vert^{-2n} \Big\{\vert w - 0^{*} \vert^2-\sum_{i=1}^n
w_i\ov{(w_i-0^{*}_i})\Big\}
= \vert w - 0^{*} \vert^{-2n} \sum_{i=1}^n 0^{*}_i \ov{(w_i-0^{*}_i})\\ = \vert
w - 0^{*} \vert^{-2n}\Big(-\vert 0^{*} \vert^2 + \sum_{i=1}^{n} 0^{*}_i \ov w_i
\Big).
\end{multline}
From this equation we obtain
\begin{equation}\label{der-g_0}
\frac{\pa}{\pa w_c}\big\{g_0(p_0,w) +\ov{ g_0(p_0,w)}\big\}\Big
\vert_{w=0}=-(2n-1)0^{*}_{\ov c} \vert 0^{*} \vert^{-2n} = -(2n-1) \psi_c(p_0)
\vert \pa \psi(p_0) \vert^{2n-2}.
\end{equation}
Finally,
\begin{multline}\label{g_al-expt}
g_{\al}(p_0,w)=  -(n-1)\psi_{\al}(p_0) \big\{g_0(p_0,w) +
\ov{g_0(p_0,w)}\big\}\\ =  (n-1)\psi_{\al}(p_0)\vert w - 0^{*}
\vert^{-2n}  \Big\{2\vert 0^{*} \vert^2 - \sum_{i=1}^{n} (0^{*}_i \ov w_i + \ov
0^{*}_i w_i)\Big\},
\end{multline}
and hence by (\ref{der-g_0}),
\begin{equation}\label{der-g_al-expt}
\frac{\pa g_{\al}}{\pa w_c}(p_0,0) = (n-1)(2n-1)\psi_{\al}(p_0)\psi_{c}(p_0)
\vert \pa \psi(p_0) \vert^{2n-2}
\end{equation}
as desired.
\end{proof}
Next we calculate the second the second order derivatives of $g_{\al}$. To
simplify the calculations, we will consider only a special case which is
required for the proof of them main theorem.
\begin{lem}\label{asymp-2nd-der-g_al}
Under the normalisat $(\dagger)$, we have for all $\al, \be \in I$, $\be \neq
n$ and for all $c \in I \cup \ov I$,
\begin{multline*}
\lim_{\nu \to \infty} \frac{\pa^2 {g_{\nu}}_{\al}}{\pa w_c \pa \ov
p_{\be}}(p_{\nu}, 0)=\frac{\pa^2 g_{\al}}{\pa w_c \ov \pa p_{\be}}(0,0)\\ = 
\begin{cases}
(n-1)(2n-1)\psi_{\al}(0)\psi_{\ov{\be}c}(0) & \text{if $c \neq n, \ov n$}\\
(n-1)(2n-1)\psi_{\al}(0)
\left\{\left(n+\frac{1}{2}\right)\psi_{\ov{\be} c}(0)+\left(n-\frac { 1
} { 2 } \right) \psi_{\ov{\be}\ov{c}}(0)\right\}\\ + (2n-1)
\psi_{\al\ov{\be}}(0) &
\text{if $c = n$ or $c= \ov n$}.
\end{cases}
\end{multline*}
\end{lem}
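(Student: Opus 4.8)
The plan is to follow the scheme of the proof of Lemma \ref{asymp-g_al}: first reduce, by a convergence argument, to evaluating $\pa^{2}g_\al/\pa w_c\,\pa\ov p_\be$ at $(0,0)$, and then carry out that evaluation by differentiating the defining formula (\ref{new-def-g_al}) and using the boundary first variation formula for the Green function of the model half-space $\mathcal H=D(0)$.

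For the reduction I would argue exactly as in Lemma \ref{asymp-g_al}. Since $\{D_\nu(p_\nu)\}$ is a $C^{\infty}$-perturbation of $\mathcal H$, the results of \cite{B} give that $\{g_\nu(p_\nu,w)\}$, $\{\pa g_\nu/\pa w_i(p_\nu,w)\}$ and $\{\pa g_\nu/\pa p_\al(p_\nu,w)\}$ converge uniformly on compact subsets of $\mathcal H\setminus\{0\}$, and the corresponding second-variation statements from \cite{B} give the same for $\{\pa^{2}g_\nu/\pa p_\al\,\pa\ov p_\be(p_\nu,w)\}$ and $\{\pa^{2}g_\nu/\pa w_i\,\pa\ov p_\be(p_\nu,w)\}$. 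Substituting these into (\ref{new-def-g_al}) shows that $\{\pa {g_\nu}_\al/\pa\ov p_\be(p_\nu,w)\}$ converges uniformly on compacts of $\mathcal H$ to a function harmonic in $w$, and then, just as in Lemma \ref{asymp-g_al}, one more differentiation in $w$ is legitimate and produces $\lim_\nu \pa^{2}{g_\nu}_\al/\pa w_c\,\pa\ov p_\be(p_\nu,0)=\pa^{2}g_\al/\pa w_c\,\pa\ov p_\be(0,0)$, so the problem becomes the computation of this last quantity.

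For the computation I would differentiate (\ref{new-def-g_al}) first in $\ov p_\be$, then in $w_c$, set $(p,w)=(0,0)$, and simplify with $(\dagger)$: $\psi(0)=0$, $\psi_j(0)=0$ for $j<n$ (hence $\psi_{\ov\be}(0)=0$, as $\be\neq n$), $|\pa\psi(0)|=1$, and $0^{*}=\ov\pa\psi(0)=(0,\dots,0,1)$. The term $-(n-1)\psi_\al(g_0+\ov{g_0})$ splits into a piece carrying $\pa(g_0+\ov{g_0})/\pa w_c(0,0)=-(2n-1)\psi_c(0)$, supplied by (\ref{der-g_0}) together with (\ref{g_0-expt}), and a piece carrying $\pa^{2}(g_0+\ov{g_0})/\pa w_c\,\pa\ov p_\be(0,0)$; writing $g_0$ via (\ref{def-g_0}) reduces the latter to $\pa^{2}g/\pa w_c\,\pa\ov p_\be(0,0)$, the first $p$-variation of the Green function at the boundary point. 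The term $\psi\,\pa g/\pa p_\al$ contributes $\psi_{\ov\be}(0)\,\pa^{2}g/\pa p_\al\,\pa w_c(0,0)+\lim_\nu\psi_\nu(p_\nu)\,\pa^{3}g_\nu/\pa p_\al\,\pa\ov p_\be\,\pa w_c(p_\nu,0)$; the first summand vanishes because $\psi_{\ov\be}(0)=0$, but — and here the absence of a known $C^{3}$-bound on $\la$ is felt — the third-order summand need not vanish and must be extracted from the second variation formula for $g$. Both $\pa^{2}g/\pa w_c\,\pa\ov p_\be(0,0)$ and that third-order term I would obtain from the boundary first variation formula (\ref{1st-v-f-bdy}) for $g$, differentiated in $w_c$, after inserting the explicit half-space data: the Green functions (\ref{G-H}) with symmetric point (\ref{sym-pt}), the relation $|\pa_\z g(0,\z)|=(n-1)|\z|^{-2n}$ on $\pa\mathcal H$, and $k_1^{(\be)}(0,\z)=\sum_j(\z_j\psi_{\be j}(0)+\ov\z_j\psi_{\be\ov j}(0))$ from the proof of Lemma \ref{la_a-expt}; the resulting surface integrals over $\pa\mathcal H$ are then computed by passing to polar coordinates and running the integration-by-parts recursion and residue argument already employed in the proof of Lemma \ref{la_a-expt}, with Lemma \ref{dpsi/psi} used to handle the factors $\psi_\nu^{-1}(\pa\psi_\nu/\pa p_a)$.

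The two cases $c\neq n,\ov n$ and $c=n$ or $c=\ov n$ are exactly the two cases of whether the $w_c$-differentiation sees the symmetric-point term $|w-0^{*}|^{-2n+2}$ in $g_w$, since $0^{*}$ has only its $n$-th component nonzero: in the first case few angular monomials survive the integration over $\pa\mathcal H$ and one is left with $(n-1)(2n-1)\psi_\al(0)\psi_{\ov\be c}(0)$, while in the second the extra monomials involving $\z_n$ enter and produce the coefficients $n+\tfrac12$, $n-\tfrac12$ along with the $(2n-1)\psi_{\al\ov\be}(0)$ term. The main obstacle is precisely this last part: pinning down the limit of $\psi_\nu\,\pa^{3}g_\nu/\pa p_\al\,\pa\ov p_\be\,\pa w_c(p_\nu,0)$ through the second variation, and evaluating the non-compact integrals over $\pa\mathcal H=\mathbf R^{2n-1}$ correctly; the remaining bookkeeping is routine differentiation combined with the normalisation $(\dagger)$.
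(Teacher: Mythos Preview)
Your overall plan --- reduce by a uniform-convergence argument to evaluating $\pa^{2}g_{\al}/\pa w_{c}\,\pa\ov p_{\be}$ at $(0,0)$, then compute this using the first variation formula for the Green function of the model half-space --- is exactly the paper's approach, and the split into the cases $c\neq n,\ov n$ versus $c=n,\ov n$ is correct for the reason you state.

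There is, however, a self-inflicted complication in your computation step. After the reduction you are evaluating $\pa^{2}g_{\al}/\pa w_{c}\,\pa\ov p_{\be}(0,0)$ \emph{at the boundary point} $p=0$; there is no longer any limit in $\nu$ present. Differentiating (\ref{new-def-g_al}) (equivalently (\ref{def-g_al})) in $\ov p_{\be}$ produces the term $\psi\,\pa^{2}g/\pa p_{\al}\,\pa\ov p_{\be}$, and since $\psi(0)=0$ this term vanishes identically in $w$ at $p=0$. Consequently no third-order derivative $\pa^{3}g/\pa p_{\al}\,\pa\ov p_{\be}\,\pa w_{c}$ ever appears, and there is nothing to ``extract from the second variation formula''. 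The quantity you write as $\lim_{\nu}\psi_{\nu}(p_{\nu})\,\pa^{3}g_{\nu}/\pa p_{\al}\,\pa\ov p_{\be}\,\pa w_{c}(p_{\nu},0)$ is a red herring: you have already traded all $\nu$-limits for direct evaluation at $p=0$ in the reduction step, where the convergence $\psi_{\nu}(p_{\nu})\,\pa^{2}g_{\nu}/\pa p_{\al}\,\pa\ov p_{\be}(p_{\nu},w)\to 0$ needed only the (available) second-order control on $g$. In the same vein, Lemma~\ref{dpsi/psi} plays no role in this computation.

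Once this is seen, the formula collapses to
\[
\frac{\pa^{2}g_{\al}}{\pa w_{c}\,\pa\ov p_{\be}}(0,0)
= -(2n-1)\,\psi_{\al}(0)\,\frac{\pa^{2}g}{\pa w_{c}\,\pa\ov p_{\be}}(0,0)
  +(2n-1)\,\psi_{c}(0)\,\psi_{\al\ov\be}(0),
\]
and the remaining task is exactly the one you describe: evaluate $\pa^{2}g/\pa w_{c}\,\pa\ov p_{\be}(0,0)$ by differentiating the boundary first variation formula~(\ref{1st-v-f-bdy}) in $w_{c}$, insert the explicit half-space data, and compute the resulting integrals over $\pa\mathcal H$ by the same integration-by-parts/residue scheme as in Lemma~\ref{la_a-expt}.
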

\begin{proof}
Let
\[
\mathcal{H}=D(0)=\big\{w \in \mf{C}^n : 2 \Re w_n - 1 < 0 \big\}.
\]
We will show that $\pa {g_{\nu}}_{\al}/\pa \ov p_{\be}$ converges uniformly on
compact subsets of $\mathcal{H}$ to $\pa g_{\al}/\pa \ov p_{\be}$.
Differentiating (\ref{def-g_al}) with respect to $\ov p_{\be}$,
\begin{multline}\label{der-g_al}
\frac{\pa g_{\al}}{\pa \ov p_{\be}} =\psi \frac{\pa^2 g}{\pa \ov p_{\be} \pa
p_{\al}}+ \frac{\pa \psi}{\pa \ov p_{\be}} \frac{\pa g}{\pa p_{\al}}  
- \frac{\pa \psi}{\pa p_{\al}} \left\{(2n-2) \frac{\pa g}{\pa \ov p_{\be}} +
\sum_{i=1}^n \Big(w_i \frac{\pa^2 g}{\pa \ov p_{\be} \pa w_i}+\ov w_i
\frac{\pa^2 g}{\pa \ov p_{\be} \pa \ov w_i}\Big) \right\}
\\-\frac{\pa^2 \psi}{\pa \ov p_{\be}\pa p_{\al}} \left\{ (2n-2) g
+\sum_{i=1}^{n} \Big(w_i \frac{\pa g}{\pa w_i}+\ov w_i \frac{\pa g}{\pa \ov
w_i}\Big) \right\}.
\end{multline}
By {Remark 4.5} of \cite{B}, $\{\frac{\pa g_{\nu}}{\pa p_{\al}}(p_{\nu},w)\}$
and similarly by the arguments of Section~5 of \cite{B}, $\{\frac{\pa^2
g_{\nu}}{\pa \ov p_{\be} \pa p_{\al}}(p_{\nu}, w)\}$ converges uniformly on
compact subsets of $\mathcal{H}$ to $\frac{\pa^2 g}{\pa \ov p_{\be} \pa
p_{\al}}(0, w)$. By
harmonicity $\{\frac{\pa^2 g}{\pa w_i \pa \ov p_{\be}}(p_{\nu},w)\}$ converges
uniformly on compact subsets of $\mathcal{H}$ to $\frac{\pa^2 g_{\nu}}{\pa w_i
\pa \ov p_{\be}}(0,w)$. As in the previous lemma, $\{g_{\nu}(p_{\nu},w)\}$ and
$\{\frac{\pa g_{\nu}}{\pa w_i}(p_{\nu},w)\}$ converges uniformly on compact
subsets of $\mathcal{H} \setminus \{0\}$ to $g(p_0,w)$ and $\frac{\pa g}{\pa
w_i}(0,w)$ respectively. Hence $\{\frac{\pa {g_{\nu}}_{\al}}{\pa \ov
p_{\be}}(p_{\nu},w)\}$ converges uniformly to $\frac{\pa g_{\al}}{\pa \ov
p_{\be}}(0,w)$ on compact subsets of $\mathcal{H} \setminus \{0\}$ and hence
of $\mathcal{H}$ by the mean value theorem.  The first
equality is now a consequence of harmonicity of these functions.

To calculate
\[
\frac{\pa^2 g_{\al}}{\pa w_c \pa \ov p_{\be}}(p_0,0),
\]
note that from (\ref{der-g_al}),
\begin{multline*}
\frac{\pa g_{\al}}{\pa \ov p_{\be}}(0,w) = - \psi_{\al}(0)
\left\{(2n-2) \frac{\pa g}{\pa \ov p_{\be}}(0,w) +
\sum_{i=1}^n \Big(w_i \frac{\pa^2 g}{\pa \ov p_{\be} \pa w_i}(0,w)+\ov w_i
\frac{\pa^2 g}{\pa \ov p_{\be} \pa \ov w_i}(0,w)\Big) \right\}\\
-\psi_{\al\ov\be}(0)\Big\{g_0(0,w)+\ov{g_0(0,w)}\Big\}.
\end{multline*}
Differentiating this with respect to $w_c$ and using (\ref{der-g_0}),
\begin{equation}\label{2nd-der-g_al}
\frac{\pa^2 g_{\al}}{\pa w_c \pa \ov p_{\be}}(0,0) = -(2n-1)\psi_{\al}(0)
\frac{\pa^2 g}{\pa w_c \pa \ov p_{\be}}(0,0)
+(2n-1)\psi_c(0)\psi_{\al\ov\be}(0).
\end{equation}
Now,

From the work in \cite{LY}*{Chapter 4},
\[
\frac{\pa g}{\pa \ov p_{\be}}(0,w) = \frac{1}{2(n-1)\sigma_{2n}}\int_{\pa
\mathcal{H}} \ov{k_1^{(\be)}(0, \z)} \vert \pa_{\z}g(0,\z) \vert \frac{\pa
g_{w}}{\pa n_\z} (0,\z)dS_{\z},
\]
where
\[
k_1^{(\be)}(0,\z) = \frac{\pa f}{\pa p_{\be}}(0,\z)\Big/{\vert \pa _{\z} f(0,\z)
\vert} = \sum_{j=1}^n \Big(\z_{j} \psi_{\be j}(0) +
\ov \z_{i}\psi_{\be \ov j}(0)  \Big),
\]
and $g_w(0, \z)$ is the Green function for $\mathcal{H}$ with pole at $w$.
From the explicit formula (\ref{G-H}),
\[
g_w(0,\z) = \vert \z - w \vert^{-2n+2} - \vert \z - w^{*} \vert^{-2n+2}, \quad
\z, w \in \mathcal{H},
\]
where
\[
w^{*}=\big(w_1, \ldots, w_{n-1}, (1-\Re w_n) + i \Im w_n\big)
\]
is the symmetric point of $w$ with respect to the hyperplane $\pa \mathcal{H}$.
Therefore,
\[
\frac{\pa g_w}{\pa \z_i}(0,\z)  = -(n-1) \big\{\vert \z - w \vert^{-2n} ( \ov
\z_i - \ov w_i) -\vert \z - w^{*} \vert^{-2n} ( \ov \z_i - \ov w^{*}_i)\big\},
\quad, \z, w \in \mathcal{H}, 1 \leq i \leq n.
\]
In particular, for $\z \in \pa H$, since $\vert \z - w \vert = \vert \z - w^{*}
\vert$,
\[
\frac{\pa g_w}{\pa \z_i}(0,\z)  = -(n-1) \vert \z-w \vert^{-2n}\ov w^{*}_i,
\quad w\in \mathcal{H},1 \leq i \leq n.
\]
This implies that
\[
\vert \pa_{\z}g(0, \z)\vert = (n-1) \vert \z \vert^{-2n}, \quad \z \in \pa H,
\]
and
\[
\frac{\pa g_w}{\pa n_{\z}}(0,\z)=\frac{\pa g_w}{\pa x_n}(0,\z)=-2(n-1)\vert \z -
w \vert^{-2n}(1-\Re w_n), \quad \z \in \pa H, w \in \mathcal{H}.
\]
Therefore,
\begin{multline*}
\frac{\pa g}{\pa \ov p_{\be}}(0,w)= -\frac{(n-1)(1-\Re
w_n)}{\sigma_{2n}}\sum_{j=1}^{n}\left\{\psi_{\ov\be j}(0) \int_{\pa \mathcal{H}}
\z_i
\vert \z \vert^{-2n} \vert \z - w \vert^{-2n}
dS_{\z}\right.\\
\left.+ \psi_{\ov\be \ov{j}}(0)
\int_{\pa \mathcal{H}} \ov{\z} _i \vert \z \vert^{-2n} \vert \z - w \vert^{-2n}
dS_{\z}\right\}.\end{multline*}
Differentiating with respect to $w_c$,
\begin{multline}\label{2nd-der-g-expt}
\frac{\pa^2 g}{\pa w_c \pa \ov p_{\be}}(0,0)=- \frac{n(n-1)}{\sigma_{2n}}
\sum_{j=1}^{n}\left\{\psi_{\ov{\be}j}(0) \int_{\pa \mathcal{H}}
\z_j \ov{\z}_c \vert \z \vert^{-4n-2} dS_{\z}+\psi_{\ov{\be}\ov{j}}(0) \int_{\pa
\mathcal{H}} \ov{\z}_j \ov{\z}_c\vert \z \vert^{-4n-2} dS_{\z}\right\}\\
+\frac{(n-1)}{\sigma_{2n}}\frac{\pa \Re w_n}{\pa w_c}
\sum_{j=1}^{n}\left\{\psi_{\ov{\be} j}(0) \int_{\pa \mathcal{H}}
\z_j \vert \z \vert^{-4n} \, dS_{\z}+ \psi_{\ov{\be} \ov{j}}(0) \int_{\pa
\mathcal{H}}
\ov{\z} _j \vert \z \vert^{-4n} \,
dS_{\z}\right\}
\end{multline}
We now consider two cases:

\medskip

Case I. $c \neq n, \ov n$. Let $1 \leq j \leq n$, and $1 \leq k \leq (n-1)$.
Then integrating with respect to $x_k$ and $y_k$ variables first,
\[
\int_{\pa \mathcal{H}}\z_j  \z_k \vert \z \vert^{-4n-2} dS_{\z}
=\int_{-\infty}^{\infty} \cdots \int_{-\infty}^{\infty}
\frac{(x_jx_k-y_jy_k)+i(x_jy_k+y_jx_k)}{(x_1^2 +y_1^2+ \cdots
+1/4+x_{2n}^2)^{2n+1}} \, dx_1dy_1\cdots \widehat{dx_{n}}dy_{n}  = 0,
\]
and also for $j \neq k$,
\[
\int_{\pa \mathcal{H}}\z_j \ov \z_k \vert \z \vert^{-4n-2} dS_{\z} =
\int_{-\infty}^{\infty} \cdots \int_{-\infty}^{\infty} \frac{(x_jx_k+y_jy_k) +
i(y_jx_k-x_jy_k)}{(x_1^2 +y_1^2+ \cdots +1/4+x_{2n}^2)^{2n+1}} \, dx_1dy_1\cdots
\widehat{dx_{n}}dy_{n} = 0.
\]
It follows from (\ref{2nd-der-g-expt}) that
\begin{equation}\label{2nd-der-g-C1}
\frac{\pa^2 g}{\pa w_c \pa \ov p_{\be}}(0,0)=
-\frac{n(n-1)}{\sigma_{2n}}\psi_{\ov{\be} c}(0)\int_{\pa
\mathcal{H}} \vert \z_c \vert^2 \vert \z \vert^{-4n-2} dS_{\z}.
\end{equation}
Now, if $1\leq k\leq n-1$,
\begin{multline}\label{def-A}
\frac{1}{\sigma_{2n}}\int_{\pa\mathcal{H}}\vert \z_k \vert^2 \vert \z
\vert^{-4n-2} dS_{\z} = \frac{1}{\sigma_{2n}}\int_{-\infty}^{\infty} \cdots
\int_{-\infty}^{\infty} \frac{x_k^2+y_k^2}{(x_1^2 +y_1^2+ \cdots
+1/4+x_{2n}^2)^{2n+1}} \, dx_1dy_1\cdots \widehat{dx_{n}}dy_{n}\\
=\frac{2}{\sigma_{2n}}\int_{-\infty}^{\infty} \cdots \int_{-\infty}^{\infty}
\frac{x_k^2}{(x_1^2 +y_1^2+ \cdots +1/4+y_{n}^2)^{2n+1}} \,
dx_1dy_1\cdots\widehat{dx_{n}}dy_{n} \equiv 2A.
\end{multline}
Note that for $K>0$ and $m>1$, integrating by parts,
\[
\int_{-\infty}^{\infty}\frac{t^2}{(t^2+K)^{m+1}} \, dt
=\frac{1}{2m}\int_{-\infty}^{\infty} \frac{1}{(t^2+K)^m} dt.
\]
Therefore, taking $m=2n$, $K=x_1^2+ y_1^2+ \cdots +\widehat{x_i^2}+y_i^2+\cdots
+1/4 + y_n^2$,
\begin{equation}\label{A}
A= \frac{1}{4n \sigma_{2n}}\int_{-\infty}^{\infty} \cdots
\int_{-\infty}^{\infty} \frac{1}{(x_1^2 +y_1^2+ \cdots + 1/4+y_n^2)^{2n}} \,
dx_1 dy_1\cdots \widehat{dx_{n}}dy_n=\frac{1}{4n}X = \frac{1}{2n}
\end{equation}
where $X$ is as in lemm \ref{la_a-expt}. Hence from
(\ref{2nd-der-g-C1}) and (\ref{def-A}),
\[
\frac{\pa^2 g}{\pa w_c \pa \ov p_{\be}}(0)= -(n-1)\psi_{\ov{\be}c}(0).
\]
Therefore, from (\ref{2nd-der-g_al}),
\[
\frac{\pa^2 g_{\al}}{\pa w_c \pa \ov p_{\be}}(0,0)=
(n-1)(2n-1)\psi_{\al}(0)\psi_{\ov{\be}c}(0) \quad \text{if $c \neq n, \ov n$}.
\]

Case II. $c=n$ or $c =\ov n$. Let $1 \leq j \leq n-1$. Then integrating with
respect to $x_j$ and $y_j$ variables first,
\[
\int_{\pa \mathcal{H}} \z_j \z_n \vert \z \vert^{-4n-2} dS_{\z} =
\int_{-\infty}^{\infty}
\cdots \int_{-\infty}^{\infty} \frac{(x_j/2-y_jy_n) + i(x_jy_n+y_j/2)}{(x_1^2
+y_1^2+ \cdots +1/4+x_{2n}^2)^{2n+1}} \, dx_1dy_1\cdots \widehat{dx_{n}}dy_{n} =
0
\]
and similarly
\[
\int_{\pa \mathcal{H}} \z_j \ov \z_n \vert \z \vert^{-4n-2} dS_{\z}=0, \quad
\int_{\pa \mathcal{H}}
\z_j \vert \z \vert^{-4n} dS_{\z}=0.
\]
It follows from (\ref{2nd-der-g-expt}) that
\begin{multline}\label{2nd-der-g-C2}
\frac{\pa^2g}{\pa w_c \pa \ov
p_{\be}}(0)=-\frac{n(n-1)}{\sigma_{2n}}\left\{\psi_{\ov{\be}c}(0) \int_{\pa
\mathcal{H}}
\vert \z_n \vert^2 \vert \z \vert^{-4n-2}
dS_{\z}+\psi_{\ov{\be}\ov{c}}(0) \int_{\pa \mathcal{H}} \z_{\ov
c}^2 \vert \z \vert^{-4n-2} dS_{\z}\right\}\\
+\frac{(n-1)}{2\sigma_{2n}}\left\{\psi_{\ov{\be}n}(0)
\int_{\pa \mathcal{H}} \z_n \vert \z \vert^{-4n} dS_{\z} +\psi_{\ov{\be}
\ov{n}}(0)
\int_{\pa \mathcal{H}} \ov \z_n \vert \z \vert^{-4n}
dS_{\z}\right\}.
\end{multline}
Now,
\begin{multline*}
\frac{1}{\sigma_{2n}}\int_{\pa \mathcal{H}}\vert \z_n \vert^2 \vert \z
\vert^{-4n-2} dS_{\z} = \frac{1}{4 \sigma_{2n}}\int_{-\infty}^{\infty} \cdots
\int_{-\infty}^{\infty} \frac{1}{(x_1^2 +y_1^2+ \cdots + 1/4+y_n^2)^{2n+1}}\,
dx_1 dy_1\cdots \widehat{dx_{n}}dy_n\\ +
\frac{1}{\sigma_{2n}}\int_{-\infty}^{\infty} \cdots \int_{-\infty}^{\infty}
\frac{y_n^2}{(x_1^2 +y_1^2+ \cdots +1/4+x_{2n}^2)^{2n+1}} \,
dx_1dy_1\cdots\widehat{dx_{n}}dy_{n} \equiv \frac{1}{4}B + A.
\end{multline*}
Then,
\begin{multline*}
B= \frac{1}{ \sigma_{2n}}\int_{-\infty}^{\infty} \cdots \int_{-\infty}^{\infty}
\frac{1}{(x_1^2 +y_1^2+ \cdots + 1/4+y_n^2)^{2n+1}}\, dx_1 dy_1\cdots
\widehat{dx_{n}}dy_n\\
=\frac{\sigma_{2n-1}}{\sigma_{2n}}\int_{0}^{\infty}\frac{r^{2n-2}}{(r^2+1/4)^{
2n+1}}dr = \frac{\sigma_{2n-1}}{\sigma_{2n}} I(2n-2, 2n+1).
\end{multline*}
As in Lemma \ref{la_a-expt},
\[
I(2n-2,2n+1)=\frac{2n-3}{2(2n)} \frac{2n-5}{2(2n-1)}\cdots
\frac{1}{2(n+2)}I(0,n+2),
\]
and
\[
I(0,n+2)=\frac{\pi}{(n+1)!}(n+2)(n+3)\cdots (2n+2),
\]
so that
\begin{multline*}
B= \left\{\frac{1}{\pi} \frac{2^{n-1} (n-1)!}{(2n-3)(2n-5)\ldots 1}\right\}
\left\{\frac{2n-3}{2(2n)} \frac{2n-5}{2(2n-1)}\cdots \frac{1}{2(n+2)}\right\}
\left\{\frac{\pi}{(n+1)!}(n+2)(n+3)\cdots (2n+2)\right\}\\=\frac{2(2n+1)}{n}.
\end{multline*}
Therefore,
\[
\frac{1}{\sigma_{2n}}\int_{\pa \mathcal{H}}\vert \z_n \vert^2 \vert \z
\vert^{-4n-2} dS_{\z} =\frac{(n+1)}{n}.
\]
Also,
\begin{multline*}
\frac{1}{\sigma_{2n}}\int_{\pa \mathcal{H}} \z_{\ov c}^2 \vert \z \vert^{-4n-2}
dS_{\z} =
\frac{1}{\sigma_{2n}}\int_{-\infty}^{\infty}\cdots \int_{-\infty}^{\infty}
\frac{(1/4-y_n^2)\pm iy_n}{(x_1^2 +y_1^2+ \cdots +1/4+x_{2n}^2)^{2n+1}} \,
dx_1dy_1\cdots\widehat{dx_{n}}dy_{n}\\ = \frac{1}{4}B-A=1,
\end{multline*}
and
\begin{multline*}
\frac{1}{\sigma_{2n}}\int_{\pa \mathcal{H}}\z_n \vert \z \vert^{-4n} dS_{\z}
=\frac{1}{\sigma_{2n}}\int_{-\infty}^{\infty}\cdots \int_{-\infty}^{\infty}
\frac{1/2+iy_n}{(x_1^2 +y_1^2+ \cdots +1/4+x_{2n}^2)^{2n}} \,
dx_1dy_1\cdots\widehat{dx_{n}}dy_{n}\\
=\frac{1}{2}\frac{1}{\sigma_{2n}}\int_{-\infty}^{\infty}\cdots
\int_{-\infty}^{\infty} \frac{1}{(x_1^2 +y_1^2+ \cdots +1/4+x_{2n}^2)^{2n}} \,
dx_1dy_1\cdots\widehat{dx_{n}}dy_{n} = 2nA=1.
\end{multline*}
Hence from (\ref{2nd-der-g-C2}),
\[
\frac{\pa^2 g}{\pa w_c \pa \ov p_{\be}}(0,0)
=-(n-1)\left\{\left(n+\frac{1}{2}\right)\psi_{\ov{\be} c}(0)+\left(n-\frac { 1
} { 2 } \right) \psi_{\ov{\be}\ov{c}}(0)\right\}.
\]
Thus from (\ref{2nd-der-g_al}),
\begin{multline*}
\frac{\pa^2 g_{\al}}{\pa w_c \pa \ov p_{\be}}(0,0)=
(n-1)(2n-1)\psi_{\al}(0)
\left\{\left(n+\frac{1}{2}\right)\psi_{\ov{\be} c}(0)+\left(n-\frac { 1
} { 2 } \right) \psi_{\ov{\be}\ov{c}}(0)\right\}\\ + (2n-1)
\psi_{\al\ov{\be}}(0), \quad \text{if $c = n$ or $c= \ov n$},
\end{multline*}
as desired.
\end{proof}
Combining the inormations about the derivatives of ${g_{\nu}}_{\al}$, we obtain
the following asymptotics of the third derivatives of $\La_{\nu}$:

\begin{prop}\label{asymp-3rd-der-La}
Under the normalisation $(\dagger)$, we have for all $\al,\be, \ga \in I$, $\be
\neq n$,
\begin{multline*}
\lim_{\nu \to \infty} \frac{\pa^3 \La_{\nu}}{\pa p_{\al} \pa \ov p_{\be} \pa
p_c}(p_{\nu})\psi_{\nu}(p_{\nu})^{2n}\\=
\begin{cases}
-2(n-1)(2n-1)\psi_{\al}(0)\psi_{\ov{\be}c}(0) &  \text{if $c \neq n, \ov n$},\\
-(n-1)(2n-1)\psi_{\al}(0) \big\{\psi_{\ov{\be}c}(0) - \psi_{\ov{\be}\ov{c}}(0)
\big\}-2(2n-1) \psi_{\al\ov{\be}}(0) & \text{if $c=n$ or
$c=\ov n$}.
\end{cases}
\end{multline*}
\end{prop}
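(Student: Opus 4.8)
The plan is to read off the result directly from the identity (\ref{3rd-der-La}), which (applied to $\La_{\nu}$ and the associated $g_{\nu}$) states
\[
\frac{\pa^3 \La_{\nu}}{\pa p_{\al} \pa \ov p_{\be} \pa p_c}(p_{\nu})\,\psi_{\nu}(p_{\nu})^{2n}
= -2\,\frac{\pa^2 {g_{\nu}}_{\al}}{\pa w_c \pa \ov p_{\be}}(p_{\nu},0)
+ 4n\cdot\frac{\psi_{\nu,\ov{\be}}(p_{\nu})}{\psi_{\nu}(p_{\nu})}\cdot\frac{\pa {g_{\nu}}_{\al}}{\pa w_c}(p_{\nu},0).
\]
So it suffices to show that each term on the right has a limit as $\nu\to\infty$ and to identify it. The first term converges by Lemma \ref{asymp-2nd-der-g_al}. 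In the second term, the factor $\psi_{\nu,\ov{\be}}(p_{\nu})/\psi_{\nu}(p_{\nu})$ converges by Lemma \ref{dpsi/psi} — this is exactly the place where the hypothesis $\be\neq n$ is used, since it guarantees $\ov{\be}\neq n,\ov n$ — while $\frac{\pa {g_{\nu}}_{\al}}{\pa w_c}(p_{\nu},0)$ converges by Lemma \ref{asymp-g_al}; hence the product converges to the product of the two limits.

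Next I would feed in the normalisation $(\dagger)$, namely $\psi_j(0)=0$ for $1\le j<n$, $\psi_n(0)=1$, and $|\pa\psi(0)|=1$. By Lemma \ref{asymp-g_al} the limit of $\frac{\pa {g_{\nu}}_{\al}}{\pa w_c}(p_{\nu},0)$ is $(n-1)(2n-1)\psi_{\al}(0)\psi_c(0)$, which vanishes unless $c\in\{n,\ov n\}$ (because $\psi_c(0)=0$ otherwise), and equals $(n-1)(2n-1)\psi_{\al}(0)$ when $c\in\{n,\ov n\}$ since $\psi_n(0)=\psi_{\ov n}(0)=1$. By Lemma \ref{dpsi/psi}, $\psi_{\nu,\ov{\be}}(p_{\nu})/\psi_{\nu}(p_{\nu})\to\tfrac12\big(\psi_{\ov{\be}n}(0)+\psi_{\ov{\be}\ov n}(0)\big)$. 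Thus in the case $c\neq n,\ov n$ the second term disappears and the first branch of Lemma \ref{asymp-2nd-der-g_al} gives $-2(n-1)(2n-1)\psi_{\al}(0)\psi_{\ov{\be}c}(0)$, as claimed. For $c=n$ (and, symmetrically, $c=\ov n$) one substitutes the second branch of Lemma \ref{asymp-2nd-der-g_al} and adds $4n\cdot\tfrac12\big(\psi_{\ov{\be}n}(0)+\psi_{\ov{\be}\ov n}(0)\big)\cdot(n-1)(2n-1)\psi_{\al}(0)$, using that for $c=n$ one has $\psi_{\ov{\be}n}(0)+\psi_{\ov{\be}\ov n}(0)=\psi_{\ov{\be}c}(0)+\psi_{\ov{\be}\ov c}(0)$ (and the same identity for $c=\ov n$). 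Collecting coefficients, the factor multiplying $(n-1)(2n-1)\psi_{\al}(0)\psi_{\ov{\be}c}(0)$ is $-2\big(n+\tfrac12\big)+2n=-1$, that multiplying $(n-1)(2n-1)\psi_{\al}(0)\psi_{\ov{\be}\ov c}(0)$ is $-2\big(n-\tfrac12\big)+2n=+1$, and there remains $-2(2n-1)\psi_{\al\ov{\be}}(0)$, which is precisely the second branch of the asserted formula.

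The argument is entirely mechanical once Lemmas \ref{asymp-g_al}, \ref{asymp-2nd-der-g_al}, and \ref{dpsi/psi} are available, so I do not anticipate a genuine obstacle. The only point requiring care is the bookkeeping in the case $c\in\{n,\ov n\}$: one must correctly match $\psi_{\ov{\be}n}(0),\psi_{\ov{\be}\ov n}(0)$ against $\psi_{\ov{\be}c}(0),\psi_{\ov{\be}\ov c}(0)$, verify that the subcases $c=n$ and $c=\ov n$ yield the same symmetric expression, and observe that when $\al\neq n$ the terms carrying the factor $\psi_{\al}(0)$ all vanish so that only $-2(2n-1)\psi_{\al\ov{\be}}(0)$ survives — consistent with the displayed formula.
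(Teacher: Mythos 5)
Your proposal is correct and is exactly the paper's proof: the paper likewise just substitutes into (\ref{3rd-der-La}) for $\La_{\nu}$ and applies Lemmas \ref{dpsi/psi}, \ref{asymp-g_al} and \ref{asymp-2nd-der-g_al}, and your bookkeeping in the case $c\in\{n,\ov n\}$ (coefficients $-2(n+\tfrac12)+2n=-1$ and $-2(n-\tfrac12)+2n=+1$) reproduces the stated limits. You simply spell out the cancellation that the paper leaves to the reader.
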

\begin{proof}
Consider the formula (\ref{3rd-der-La}) corresponding to $\La_{\nu}$ and apply
Lemma \ref{dpsi/psi}, Lemma \ref{asymp-g_al} and Lemma \ref{asymp-2nd-der-g_al}
to obtain the desired result.
\end{proof}

We conclude this section with the following calculation:
\begin{cor}\label{fasymp-der-g}
Under the normalisation $(\dagger)$, we have for all $\al, \be \in I$, $\be
\neq n$ and $c \in I \cup \ov I$,
\[
\lim_{\nu \to \infty} \frac{\pa g_{\al \ov \be}}{\pa p_{c}}(p_{\nu})
\big(\psi(p_{\nu})\big)^{2}
\]
exists and is finite.
\end{cor}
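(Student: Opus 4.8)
The plan is to read the limit off directly from the formula expressing $\pa {g_{\nu}}_{\al\ov\be}/\pa p_c$ in terms of $\La_\nu$ and its derivatives. When $c$ is unbarred this is (\ref{d-g-al-be}) with $\ga=c$, written for $\La_\nu$; when $c=\ov\ga$ is barred I will instead use the formula of the same shape obtained by differentiating (\ref{g-al-be}) with respect to $\ov p_\ga$. In either case $\big(\psi_\nu(p_\nu)\big)^2\,\big(\pa {g_{\nu}}_{\al\ov\be}/\pa p_c\big)(p_\nu)$ is a sum of five terms, each of which is a monomial $\La_\nu^{-m}$ times a product of first-, second-, or (in one term) third-order derivatives of $\La_\nu$. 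The idea is to distribute the prefactor $\psi_\nu^2$, together with further powers of $\psi_\nu$ of net exponent zero, among the individual factors so as to reproduce exactly the normalised quantities whose limits are recorded in Corollaries \ref{cor-asymp-La} and \ref{f-asymp-La} and Proposition \ref{asymp-3rd-der-La}, and then to check that no term is left with a negative power of $\psi_\nu$.

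Writing $N=2n-2$, I will use, under $(\dagger)$, the following facts, each giving a finite limit along the sequence $p_\nu$: $\psi_\nu^{N}\La_\nu\to-1$ (Corollary \ref{cor-asymp-La}(a), and the limit is nonzero); $\psi_\nu^{N+1}$ times any first-order derivative of $\La_\nu$ converges, the limit being $0$ unless the derivative is along $p_n$ or $\ov p_n$ (Corollary \ref{cor-asymp-La}(b)); $\psi_\nu^{N+2}$ times any second-order derivative of $\La_\nu$ converges — indeed $\psi_\nu^{N+1}$ already suffices when at least one index is not in $\{n,\ov n\}$ (Corollary \ref{f-asymp-La}(b)), and $\psi_\nu^{N+2}$ suffices in general (Corollary \ref{cor-asymp-La}(c)); and, because $\be\neq n$, $\psi_\nu^{N+2}\La_{\nu\,\al\ov\be c}$ converges (Proposition \ref{asymp-3rd-der-La}), $\psi_\nu^{N+1}$ times any second-order derivative carrying a $\ov p_\be$ converges (Corollary \ref{f-asymp-La}(b) applied with the barred index $\ov\be$), and, most importantly, $\psi_\nu^{N}\La_{\nu\,\ov\be}\to0$ (Corollary \ref{f-asymp-La}(a)).

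With these in hand the bookkeeping should be routine. For the third-order term, $\psi_\nu^2\,\La_{\nu\,\al\ov\be c}/\La_\nu$ equals $(\psi_\nu^{N+2}\La_{\nu\,\al\ov\be c})/(\psi_\nu^{N}\La_\nu)$, which converges. In each of the three quadratic terms $\La_\nu^{-2}AB$, exactly one of the two derivatives carries a $\ov p_\be$ (it is $\La_{\nu\,\al\ov\be}$ or $\La_{\nu\,\ov\be\ga}$/$\La_{\nu\,\al\ov\ga}$ of second order, or $\La_{\nu\,\ov\be}$ of first order); I assign to this distinguished factor the exponent $N+1$ if it is of second order and $N$ if it is of first order, and to the other factor the exponent $N+2$ if it is of second order and $N+1$ if it is of first order, so that the two exponents always sum to $2N+2$. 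Then $\psi_\nu^2\,AB/\La_\nu^2$ equals $(\psi_\nu^{d_1}A)(\psi_\nu^{d_2}B)/(\psi_\nu^N\La_\nu)^2$ with every normalised factor convergent; and whenever the distinguished factor is of first order it contributes $\psi_\nu^{N}\La_{\nu\,\ov\be}\to0$, forcing that term to $0$. Finally the cubic term equals $2(\psi_\nu^{N+1}\La_{\nu\,\al})(\psi_\nu^{N}\La_{\nu\,\ov\be})(\psi_\nu^{N+1}\La_{\nu\,\ga})/(\psi_\nu^N\La_\nu)^3\to0$, the middle factor tending to $0$ and the others staying bounded. Summing the five limits gives the assertion.

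The one point that really needs care — and the reason the hypothesis $\be\neq n$ cannot be dropped — is that normalising every first-order factor by $\psi_\nu^{N+1}$ would leave a stray $\psi_\nu^{-1}$ in the cubic term (and in the quadratic term containing $\La_{\nu\,\al\ga}$ with $\al=\ga=n$); this apparent blow-up is defused by normalising the distinguished $\ov p_\be$-factor instead by $\psi_\nu^{N}$, which simultaneously restores a nonnegative leftover exponent and contributes a vanishing factor via $\psi_\nu^{N}\La_{\nu\,\ov\be}\to0$, and analogously via $\psi_\nu^{N+1}\La_{\nu\,\al\ov\be}\to$ finite and $\psi_\nu^{N+2}\La_{\nu\,\al\ov\be c}\to$ finite for the second- and third-order variants. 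Beyond a short case distinction according to whether $\al$ and $c$ lie in $\{n,\ov n\}$, I anticipate no genuine obstacle.
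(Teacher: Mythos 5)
Your proposal is correct and follows essentially the same route as the paper: the paper likewise multiplies (\ref{d-g-al-be}) by $\psi^2$ and redistributes the powers of $\psi$ exactly as you do, writing the result as $\frac{\La_{\al \ov \be c}\psi^{2n}}{\La \psi^{2n-2}} - \frac{(\La_{\al\ov \be}\psi^{2n-1}) (\La_c\psi^{2n-1})+(\La_{\al c} \psi^{2n})(\La_{\ov \be} \psi^{2n-2})+(\La_{\ov \be c}\psi^{2n-1})(\La_{\al} \psi^{2n-1})}{(\La \psi^{2n-2})^2}+\frac{2(\La_{\al}\psi^{2n-1})(\La_{\ov \be}\psi^{2n-2})(\La_{c}\psi^{2n-1})}{(\La\psi^{2n-2})^3}$ and invoking the same three results (Corollaries \ref{cor-asymp-La}, \ref{f-asymp-La} and Proposition \ref{asymp-3rd-der-La}). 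The only difference is that the paper also evaluates the limits explicitly in the two cases $c\neq n,\ov n$ and $c\in\{n,\ov n\}$, which you do not need for mere existence but which the proof of the main theorem later uses.
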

\begin{proof}
From (\ref{d-g-al-be}),
\begin{multline*}
\frac{\pa g_{\al \ov \be}}{\pa p_{c}}\psi^2 = \frac{\La_{\al \ov \be c}
\psi^{2n}}{\La \psi^{2n-2}} - \frac{(\La_{\al\ov \be}\psi^{2n-1}) (\La_c
\psi^{2n-1})+(\La_{\al c} \psi^{2n})(\La_{\ov \be} \psi^{2n-2})+(\La_{\ov \be c}
\psi^{2n-1})(\La_{\al} \psi^{2n-1})}{(\La \psi^{2n-2})^2}\\
+\frac{2(\La_{\al}\psi^{2n-1})(\La_{\ov \be}
\psi^{2n-2})(\La_{c}\psi^{2n-1})}{(\La\psi^{2n-2})^3}.
\end{multline*}
Frist let $c \neq n, \ov n$. Then applying Corollay \ref{cor-asymp-La},
Corollary \ref{f-asymp-La} and Proposition \ref{asymp-3rd-der-La} to the above
formula corresponding to $\La_{\nu}$, we obtain
\[
\lim_{\nu\to \infty}\frac{\pa {g_{\nu}}_{\al\ov{\be}}}{\pa p_{c}}(p_{\nu})
\psi(p_{\nu})^2=2(n-1)(2n-1)\psi_{\al}(0)\psi_{\ov{\be}c}(0) 
-(2n-2)^2
\psi_{\al}(0)\psi_{\ov{\be}c}(0)=2(n-1)\psi_{\al}(0)\psi_{\ov{\be}c}(0).\]
Similarly for $c=n$ or $c=\ov n$,
\begin{multline*}
\lim_{\nu\to \infty}\frac{\pa {g_{\nu}}_{\al\ov{\be}}}{\pa p_{c}}(p_{\nu})
\psi(p_{\nu})^2=(n-1)(2n-1)\psi_{\al}(0) \big\{\psi_{\ov{\be}c}(0) -
\psi_{\ov{\be}\ov{c}}(0)
\big\}+2(2n-1)
\psi_{\al\ov{\be}}(0)\\+4(n-1)^2\psi_{\al}(0)\big\{\psi_{\ov{\be}n}(0)+\psi_{\ov
{\be}\ov{n}}(0)\big\}-4(n-1)^2\psi_{\al\ov{\be}}(0)-4(n-1)^2\psi_{\al}(0)\psi_{
\ov{\be}c}(0)\\
=(n-1)\psi_{\al}(0)\big\{(2n-1)\psi_{\ov{\be}c}(0) + (2n-3)
\psi_{\ov{\be}\ov{c}}(0)\big\}+2\{(2n-1)-2(n-1)^2\}\psi_{\al\ov{\be}}(0).
\end{multline*}
\end{proof}


\section{Geodesic spirals : Proof of Proposition \ref{key}}

\begin{proof}
We prove this proposition by contradiction. Suppose the assertion is not true.
Then there exists a sequence $\{c_{\nu}\}$ of
geodesics with the following properties:
\begin{itemize}
\item [(i)] There exists a point $a_0 \in \pa D$ such that $a_{\nu}:=
c_{\nu}(0)$
converges to $a_0$ as $\nu \to \infty$.

\item [(ii)] The unit vectors $u_{\nu}:=\frac{c^{\prime}_{\nu}(0)}{\vert
c^{\prime}_{\nu}(0)\vert}$
converges to a unit vector $u_0$.

\item [(iii)] We have $(\psi \circ c_{\nu})^{\prime}(0)=0$ and  $(\psi
\circ c_{\nu})^{\prime\prime}(0)\leq 0$ for each $\nu$.
\end{itemize}
Since the $\La$-metric is invariant under affine transformations, without loss
of generality let us assume that
\begin{itemize}
\item $a_0=0$, $\pa \psi(0)=(0,\ldots,0,1)$, and $v_0=(1,0, \ldots, 0)$.
\end{itemize}
If $\nu$ is sufficiently large, then the distance between $a_{\nu}$ and $\pa D$,
say $\de_{\nu}$, is realised by a unique point $\pi(a_{\nu}) \in \pa D$, i.e.,
\[
\de_{\nu} = d(a_{\nu}, \pa D) = \big\vert a_{\nu} - \pi(a_{\nu}) \big \vert.
\]
We again assume without loss of generality that this is true for all $\nu
\geq 1$. Now for each $\nu$, we apply a translation followed by
sufficiently many
rotations to transform the domain $D$ to a new domain $D_{\nu}$ with a global
defining function $\psi_{\nu}$, such that
\begin{itemize}
\item $\pi(a_{\nu}) \in \pa D$ corresponds to $0 \in \pa D_{\nu}$ and $\pa
\psi_{\nu}(0)=(0, \ldots, 0,1)$.

\item The geodesic $c_{\nu}$ in $D$ corresponds to the geodesic $\ga_{\nu}$ in
$D_{\nu}$ that has the following properties:
\begin{itemize}
\item [(a)] $p_{\nu}:=\ga_{\nu}(0) = (0, \ldots, 0,-\de_{\nu})$.
\item [(b)] $v_{\nu}:=\frac{\ga^{\prime}_{\nu}(0)}{\vert
\ga^{\prime}_{\nu}(0) \vert}=(1,0 \ldots,0)$.
\item [(c)] $(\psi_{\nu} \circ \ga_{\nu})^{\prime\prime}\leq 0$.
\end{itemize}
\end{itemize}
Note that the above three bullets impliy $D$ and $D_{\nu}$ are as in the
normalisation $(\dagger)$. In what follows we will derive a contradiction by
showing that
\[
\lim_{\nu \to \infty}\frac{(\psi_{\nu} \circ
\ga_{\nu})^{\prime\prime}(0)}{\vert \ga^{\prime}_{\nu} (0) \vert^2} >0.
\]
We start with the following lemma:
\begin{lem}\label{2nd-der-psi-ga}
If $\ga=(\ga_1, \ldots, \ga_n)$ is a geodesic of the $\La$-metric on $D$, then
\[
(\psi \circ \ga)^{\prime\prime}=-2 \Re \sum_{\al=1}^n\psi_{\al}(\ga)
\sum_{j,k=1}^{n}
\left(\sum_{\be=1}^{n} \frac{\pa g_{k \ov \be}}{\pa p_{j}}g^{\be \ov \al}
\right)(\ga) \, \ga^{\prime}_{j} \ga^{\prime}_{k} + 2 \Re \sum_{j,k=1}^n
\psi_{\al\be}(\ga) \ga^{\prime}_{\al} \ga^{\prime}_{\be} + 2
\mathcal{L}_{\psi}(\ga,
\ga^{\prime}).
\]
\end{lem}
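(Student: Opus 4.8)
The plan is to compute $(\psi\circ\ga)''$ directly from the geodesic equation. First I would write $(\psi\circ\ga)'=2\Re\sum_{\al}\psi_{\al}(\ga)\ga_{\al}'$, using that $\psi$ is real-valued so that the derivative of $\psi$ along $\ga$ picks up the holomorphic part plus its conjugate. Differentiating once more gives
\[
(\psi\circ\ga)''=2\Re\Big(\sum_{\al}\psi_{\al}(\ga)\ga_{\al}''+\sum_{\al,\be}\psi_{\al\be}(\ga)\ga_{\al}'\ga_{\be}'+\sum_{\al,\be}\psi_{\al\ov\be}(\ga)\ga_{\al}'\ov{\ga_{\be}'}\Big),
\]
where the last sum, being already a real (Levi-form type) expression, is what gets packaged into the term $2\mathcal L_{\psi}(\ga,\ga')$ in the statement.

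Next I would substitute the geodesic equation for a Kähler metric. For the $\La$-metric with components $g_{\al\ov\be}$ and Christoffel symbols $\Ga^{\be}_{jk}=\sum_{\al}g^{\be\ov\al}\,\pa g_{k\ov\al}/\pa p_j$ (the only nonzero ones for a Kähler metric, together with their conjugates), the geodesic equations read $\ga_{\be}''+\sum_{j,k}\Ga^{\be}_{jk}(\ga)\ga_j'\ga_k'=0$. Plugging $\ga_{\al}''=-\sum_{j,k}\big(\sum_{\be}g^{\be\ov\al}\,\pa g_{k\ov\be}/\pa p_j\big)(\ga)\ga_j'\ga_k'$ into the first sum above produces exactly the first term on the right-hand side of the lemma (with the sign as stated, since the substitution contributes a minus). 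The second sum is already the $2\Re\sum\psi_{\al\be}(\ga)\ga_{\al}'\ga_{\be}'$ term, and the third is absorbed into $2\mathcal L_{\psi}(\ga,\ga')$. One must be slightly careful that $\mathcal L_{\psi}(\ga,\ga')$ is presumably defined (earlier or implicitly) as $\Re\sum_{\al,\be}\psi_{\al\ov\be}(\ga)\ga_{\al}'\ov{\ga_{\be}'}=\sum_{\al,\be}\psi_{\al\ov\be}(\ga)\ga_{\al}'\ov{\ga_{\be}'}$, which is automatically real; either way the identity closes.

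The main (minor) obstacle is purely bookkeeping: getting the indices and complex-conjugate structure to line up correctly when one differentiates $\psi\circ\ga$ twice and then substitutes the Kähler geodesic equation, in particular keeping track of which terms are genuine holomorphic-bilinear terms ($\psi_{\al\be}\ga_{\al}'\ga_{\be}'$) versus mixed ones ($\psi_{\al\ov\be}\ga_{\al}'\ov{\ga_{\be}'}$), and verifying that only the holomorphic Christoffel symbols $\Ga^{\be}_{jk}$ enter because the metric is Kähler. There is no real analytic difficulty here — it is a one-line consequence of the chain rule plus the geodesic equation — so I would present it compactly, noting the Kähler condition as the reason the Christoffel symbols take the stated form, and leave the (routine) index manipulation to the reader.
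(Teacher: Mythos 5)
Your proof is correct and is essentially the paper's own argument: differentiate $\psi\circ\ga$ twice via the chain rule to get $2\Re\sum_{\al}\psi_{\al}\ga_{\al}''$ plus the holomorphic Hessian term and the Levi-form term, then substitute the complexified Kähler geodesic equation $-\ga_{\al}''=\sum_{j,k}\bigl(\sum_{\be}g^{\be\ov\al}\,\pa g_{k\ov\be}/\pa p_j\bigr)\ga_j'\ga_k'$. The extra bookkeeping you describe (identifying the Christoffel symbols and the real/mixed splitting) is exactly what the paper leaves implicit.
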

\begin{proof}
Note that
\[
(\psi \circ \ga)^{\prime\prime}=2\Re \sum_{\al=1}^{n} \psi_{\al}(\ga)
\ga^{\prime\prime}_{\al} +2
\Re \sum_{\al,\be=1}^{n}
\psi_{\al\be}(\ga)\ga^{\prime}_{\al}\ga^{\prime}_{\be}+2\mathcal{L}_{\psi}
(\ga ,\ga^{\prime}).
\]
On the other hand, the equations of geodesic in the complexified form is given
by
\[
-\ga^{\prime\prime}_{\al}=\sum_{j,k=1}^{n} \left(\sum_{\be=1}^{n} \frac{\pa g_{k
\ov
\be}}{\pa p_{j}}g^{\be \ov \al} \right)(\ga) \,
\ga^{\prime}_{j}\ga^{\prime}_{k}.
\]
Substituting this in the above formula yields the lemma.
\end{proof}
Now, this lemma together with (b) implies that
\begin{multline}\label{2der-psi-ga}
\frac{(\psi_{\nu} \circ \ga_{\nu})^{\prime\prime}(0)}{\vert \dot \ga_{\nu}
(0)
\vert^2}= -2 \Re \left(\sum_{\al,
\be=1}^n
\frac{\pa \psi_{\nu}}{\pa p_\al} \frac{\pa g_{1 \ov \be}}{\pa p_1}g^{\be \ov
\al}\right)(p_{\nu})  + 2  \Re \frac{\pa^2\psi_{\nu}}{\pa p_1^2}
(p_{\nu}) + 2 \frac{\pa^2 \psi_{\nu}}{\pa p_1 \pa \ov p_1}(p_{\nu}) \\ \equiv -2
\Re I+ 2 \Re II + 2II.
\end{multline}

\noindent We will now compute the limit of $I$ as $\nu \to \infty$. For
convenience, we
will drop
the subscript $\nu$. We write $I$ as
\[
I = \sum_{\al=1}^{n-1} \sum_{\be=1}^n \psi_{\al} \frac{\pa g_{1 \ov \be}}{\pa
p_1}g^{\be \ov
\al} + \sum_{\be=1}^{n-1} \psi_n \frac{\pa g_{1 \ov \be}}{\pa p_1}g^{\be \ov n}
+\psi_n \frac{\pa g_{1 \ov n}}{\pa p_1}g^{n \ov n} \equiv A + B + C.
\]

Claim: $A\to 0$ as $\nu \to \infty$. Write
\[
A=\sum_{\al=1}^{n-1} \sum_{\be=1}^n \left(\frac{\psi_{\al}}{\psi}
\right)\left(\frac{\pa g_{1 \ov \be}}{\pa p_1}
\psi^3\right) \left(\frac{g^{\be \ov\al}}{\psi^2}\right).
\]
As $\nu\to \infty$, the first bracket converges by lemma \ref{dpsi/psi},
the second one converges to $0$ by corollary \ref{asymp-g} and third one
converges by corollary
\ref{asymp-g-inv}. Thus $A \to 0$ as $\nu \to \infty$.

\medskip

Claim: $B\to 0$ as $\nu \to \infty$. Write
\[
B= \sum_{\be=1}^{n-1} \psi_n \left(\frac{\pa g_{1 \ov \be}}{\pa p_1}
\psi^2\right)\frac{g^{\be \ov n}}{\psi^2}\]
As $\nu \to \infty$, the first bracket converges to $1$, the second one
converges to $0$ by corollary \ref{fasymp-der-g} and third one converges by
corollary
\ref{asymp-g-inv}. Thus $B\to 0$ as $\nu \to \infty$.

\medskip

Claim: $C \to \psi_{11}(0)$ as $\nu \to \infty$.
Write
\[
C=\psi_n \left(\frac{\pa g_{1\ov n}}{\pa p_1} \psi^2\right)
\left(\frac{g^{n\ov n}}{\psi^2}\right).
\]
As $\nu \to \infty$, $\frac{\pa \psi}{\pa p_n} \to 1$ and by Corollary
\ref{asymp-g-inv},
\[
\frac{g^{n\ov n}}{\psi^2} \to \frac{1}{2(n-1)}.
\]
Also by Corollary \ref{fasymp-der-g},
\[
\frac{\pa g_{1 \ov n}}{\pa p_1} = \ov {\frac{\pa g_{n \ov 1}}{\pa \ov p_1}} \to
\ov{2(n-1)\psi_{\ov{1} \ov{1}}(0)} = 2(n-1)\psi_{11}(0).
\]
Thus $C \to \psi_{11}(0)$ as $\nu \to \infty$.

It follows that $I\to \psi_{11}(0)$ as $\nu \to \infty$. Evidently $II\to
\psi_{11}(0)$ as $\nu \to \infty$. Hence from (\ref{2der-psi-ga}),
\[
\lim_{\nu \to \infty} \frac{(\psi_{\nu} \circ
\ga_{\nu})^{\prime\prime}(0)}{\vert \dot
\ga_{\nu}(0) \vert^2} = 2\psi_{1\ov1}(0)>0
\]
as $D$ is strongly pseudoconvex. This contradicts (c) and hence the
proposition is proved.
\end{proof}

\begin{bibdiv}
\begin{biblist}

\bib{B}{article}{
Author={Borah, D.},
Title={Remarks on the metric induced by the {R}obin function II},
Journal={Michigan Math. J.},
Volume={62},
Year={2013},
Pages={581--630}
}

\bib{BV}{article}{
AUTHOR = {Borah, D.},
author = {Verma, K.},
TITLE = {Remarks on the metric induced by the {R}obin function},
Journal = {Indiana Univ. Math. J.},
Volume = {60},
year ={2011},
Number ={3},
Pages = {751--802},
}

\bib{Don}{article}{
   author={Donnelly, H.},
   title={$L_2$ cohomology of pseudoconvex domains with complete K\"ahler
   metric},
   journal={Michigan Math. J.},
   volume={41},
   date={1994},
   number={3},
   pages={433--442},
   issn={0026-2285},
   review={\MR{1297700 (95h:32007)}},
   doi={10.1307/mmj/1029005071},
}

\bib{DonFef}{article}{
   author={Donnelly, H.},
   author={Fefferman, C.},
   title={$L^{2}$-cohomology and index theorem for the Bergman metric},
   journal={Ann. of Math. (2)},
   volume={118},
   date={1983},
   number={3},
   pages={593--618},
   issn={0003-486X},
   review={\MR{727705 (85f:32029)}},
   doi={10.2307/2006983},
}

\bib{F}{article}{
   author={Fefferman, Charles},
   title={The Bergman kernel and biholomorphic mappings of pseudoconvex
   domains},
   journal={Invent. Math.},
   volume={26},
   date={1974},
   pages={1--65},
   issn={0020-9910},
   review={\MR{0350069 (50 \#2562)}},
}

\bib{Her}{article}{
   author={Herbort, G.},
   title={On the geodesics of the Bergman metric},
   journal={Math. Ann.},
   volume={264},
   date={1983},
   number={1},
   pages={39--51},
   issn={0025-5831},
   review={\MR{709860 (85f:32030)}},
   doi={10.1007/BF01458049},
}

\bib{LY}{article}{
   author={Levenberg, N.},
   author={Yamaguchi, H.},
   title={The metric induced by the Robin function},
   journal={Mem. Amer. Math. Soc.},
   volume={92},
   date={1991},
   number={448},
   pages={viii+156},
   issn={0065-9266},
   review={\MR{1061928 (91m:32017)}},
}

\bib{Ohs89}{article}{
   author={Ohsawa, T.},
   title={On the infinite dimensionality of the middle $L^2$ cohomology
   of complex domains},
   journal={Publ. Res. Inst. Math. Sci.},
   volume={25},
   date={1989},
   number={3},
   pages={499--502},
   issn={0034-5318},
   review={\MR{1018512 (90i:32016)}},
   doi={10.2977/prims/1195173354},
}

\bib{Y}{article}{
   author={Yamaguchi, H.},
   title={Variations of pseudoconvex domains over ${\mathbf C}^n$},
   journal={Michigan Math. J.},
   volume={36},
   date={1989},
   number={3},
   pages={415--457},
   issn={0026-2285},
   review={\MR{1027077 (90k:32059)}},
   doi={10.1307/mmj/1029004011},
}

\end{biblist}
\end{bibdiv}
\end{document}